\documentclass[12pt, amsfonts]{amsart}


\usepackage{amsmath,amssymb,amsthm}
\usepackage[colorlinks=true]{hyperref}
\usepackage[all]{xy}
\usepackage{mathtools}


\textwidth 6in
\oddsidemargin .25in
\evensidemargin .25in
\parskip .05in
\parindent .0pt

\numberwithin{equation}{section}



\newtheorem{theorem}{Theorem}[section]
\newtheorem{proposition}[theorem]{Proposition}
\newtheorem{lemma}[theorem]{Lemma}
\newtheorem{corollary}[theorem]{Corollary}

\theoremstyle{definition}

\newtheorem{example}[theorem]{Example}

\theoremstyle{remark}


\newcommand{\Z}{\mathbb{Z}}

\newcommand{\C}{\mathbb{C}}
\newcommand{\Hq}{\mathbb{H}}

\newcommand{\Pe}{\mathcal{P}}
\newcommand{\Sq}{\operatorname{Sq}}

\newcommand{\hocolim}{\operatorname{hocolim}}

\newcommand{\CP}{\mathbb{C}\operatorname{P}}

\newcommand{\Top}{\operatorname{Top}}


\title{Steenrod problem and some graded Stanley-Reisner rings}
\author{Masahiro Takeda}
\address{Department of Mathematics, Kyoto University, Kyoto, 606-8502, Japan}
\email{takeda.masahiro.8h@kyoto-u.ac.jp}

\subjclass[2020]{55N10,55R35,13F55}

\begin{document}

 \maketitle

  \begin{abstract}
    “What kind of ring can be represented as the singular cohomology ring of a space?” is a classic problem in algebraic topology, posed by Steenrod. 
    In this paper, we consider this problem when rings are the graded Stanley-Reisner rings, in other words the polynomial rings divided by an ideal generated by square-free monomials.
    We give a necessary and sufficiently condition that a graded Stanley-Reisner ring is realizable when there is no pair of generators $x,y$ such that $|x|=|y|=2^n$ and $xy\ne 0$.
  \end{abstract}


\section{Introduction}
It is a classical problem in algebraic topology asked by Steenrod in \cite{S} which graded ring occurs as the cohomology ring of a space.
Especially when the graded ring is polynomial ring, this problem was studied by many researchers, for example \cite{AW,A,AG2,CE,DMW,DW1,DW2,H,ST,T}. 
And this polynomial ring case was finally solved by Andersen and Grodal \cite{AG1}.

On the other hand, when the graded ring is a monomial ideal ring, in other words a polynomial ring divided by an ideal generated by monomials, some researchers studied this problem.
The realizability of Stanley-Reisner rings, square-free monomial ideal rings, generated by degree $2$ elements is proved by Davis and Januszkiewicz in \cite{DJ}.
Trevisan generalize their construction and prove the realizability of monomial ideal rings generated by degree $2$ elements in \cite{Tr}.
By using polyhedral products, the realizability of Stanley-Reisner rings of a certain class is proved by Bahri, Bendersky, Cohen and Gitler in \cite{BBCG}.
So and Stanley prove the realizability of graded monomial ideal ring modulo torsion in \cite{SS}.
Thus there are results about the realizability of monomial ideal rings, but there are few results about necessary conditions for monomial ideal rings to be realizable.

In this paper we obtain a necessary and sufficiently condition for a graded Stanley-Reisner ring to be realizable when there is no pair of generators $x,y$ such that $|x|=|y|=2^n$ and $xy\ne 0$.
At first, we define the graded Stanley-Reisner ring. 
Let $K$ be a simplicial complex with the vertex set $V$, and $\phi \colon V\rightarrow 2\Z_{>0}$.
Then the graded Stanley-Reisner ring $SR(K,\phi)$ is defined by
\[
    SR(K,\phi)\cong \Z[V]/I,
\] 
where $\Z[V]$ is the polynomial ring generated by $x \in V$ with $|x|=\phi(x)$ and $I$ is the ideal generated by monomials $x_1 x_2\dots x_k$ with $\{ x_1,x_2,\dots ,x_k\}\notin K$ as a simplex.

To state the main theorem in this paper we set notation.
A simplex of a simplicial complex is maximal when the simplex is not a face of a larger simplex in the simplicial complex.
For a simplicial complex $K$ with the vertex set $V$, we define a poset (not subcomplex) $P_{\max}(K) \subset K$, where we regard $K$ as a subset of the power set of $V$.
For $\sigma \in K$, $\sigma \in P_{\max}(K)$ if and only if there exist maximal simplices $\sigma_1, \dots \sigma_n \in K$ such that $\bigcap \sigma_i =\sigma$.
And for $\sigma,\tau \in P_{\max}$, $\sigma<\tau$ when $\sigma$ is a face of $\tau$ in $K$.

\begin{theorem}\label{main}
    Let $SR(K,\phi)$ be the graded Stanley-Reisner ring for a simplicial complex $K$ with the vertex set $V$ and $\phi \colon V\rightarrow 2\Z_{>0}$.
    Suppose that the graded Stanley-Reisner ring $SR(K,\phi)$ satisfies the following: 
    \begin{itemize}
        \item If generators $x,y\in V$ satisfy $\phi(x)=\phi(y)=2^i$ for some $i\geq 2$, then $xy=0$ in $SR(K,\phi)$.
    \end{itemize}
    Then there is a space $X$ such that $H^*(X)\cong SR(K,\phi)$ if and only if $SR(K,\phi)$ satisfies the following condition.
    \begin{itemize}
        \item For $\sigma\in P_{\max}(K)$ the set $\{\phi(x)\mid x\in \sigma \}$ is equal to $\{2,2,\dots , 2\}$, $\{4,6,\dots 2n+2\}\cup \{2,2,\dots , 2\}$ or $\{4,8,\dots 4n\}\cup \{2,2,\dots , 2\}$ as a multiset for some $n$.
    \end{itemize}
\end{theorem}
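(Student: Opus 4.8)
The plan is to recognise the three admissible degree multisets as precisely the sequences of generator degrees of $H^*(BT^m;\Z)$, of $H^*(B\SU(n+1)\times T^m;\Z)$, and of $H^*(B\Sp(n)\times T^m;\Z)$, and to prove the two implications by gluing these classifying spaces along the combinatorics of $K$ for the ``if'' part, and by recovering such a classifying space out of an arbitrary realisation for the ``only if'' part.

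\emph{The ``if'' direction.} Assume every $\sigma\in P_{\max}(K)$ has one of the three multisets. To each such $\sigma$ I would assign a compact connected Lie group $G_\sigma$ — the product of a torus with at most one factor $\SU(n+1)$ or $\Sp(n)$, read off from the multiset — so that $H^*(BG_\sigma;\Z)\cong\Z[\sigma]$ as graded rings. If $\sigma<\tau$ in $P_{\max}(K)$ then the vertex set of $\sigma$ lies in that of $\tau$, so the degree multiset of $\sigma$ is a sub-multiset of that of $\tau$; a short case analysis shows that every admissible sub-multiset of an admissible multiset is realised by one of the standard group embeddings $T\subset T$, $\SU\subset\SU$, $\Sp\subset\Sp$, $\Sp(k)\subset\SU(2k)$ and products of these, and that for a coherent choice of these the induced maps are exactly the restriction maps of the Stanley--Reisner diagram $\sigma\mapsto\Z[\sigma]$. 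After rectifying this data to an honest functor on $P_{\max}(K)$ by a standard argument, set $X=\hocolim_{P_{\max}(K)}BG_{\bullet}$. To identify $H^*(X;\Z)$ I would induct on the number of maximal simplices: adjoining a maximal simplex $\tau$ exhibits $X$ as a homotopy pushout of the partial homotopy colimit and $BG_\tau$ along a subspace $Z$ determined by the faces of $\tau$ already built; since the restriction $H^*(BG_\tau;\Z)\to H^*(Z;\Z)$ is surjective, the Mayer--Vietoris sequence splits into short exact sequences and the resulting fibre product of rings is the corresponding Stanley--Reisner pushout. (That Stanley--Reisner rings are free over $\Z$ is what makes these sequences split additively.) The induction gives $H^*(X;\Z)\cong\varprojlim_{P_{\max}(K)}\Z[\bullet]\cong SR(K,\phi)$.

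\emph{The ``only if'' direction.} Suppose $H^*(X;\Z)\cong SR(K,\phi)$ and fix $\sigma\in P_{\max}(K)$. The key reduction is that the polynomial subring $\Z[\sigma]\subset SR(K,\phi)$ is itself realisable: there is a space $X_\sigma$ with $H^*(X_\sigma;\Z)\cong\Z[\sigma]$. Granting this, the classification of realisable graded polynomial rings of Andersen--Grodal \cite{AG1} (resting on \cite{A} and earlier work) identifies $\Z[\sigma]$ — torsion free with all generators in even degree — with a tensor product whose factors are of the form $H^*(\CP^\infty;\Z)$, $H^*(B\SU(k);\Z)$ or $H^*(B\Sp(k);\Z)$. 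Now the hypothesis enters: each factor $B\SU(k)$ with $k\ge2$, and each factor $B\Sp(k)$ with $k\ge1$, contributes a polynomial generator of degree $4=2^2$, so two such factors would give two distinct vertices of $\sigma$ of degree $2^2$ with nonzero product in $SR(K,\phi)$, which is forbidden. Hence at most one non-toral factor occurs, and $\{\phi(x)\mid x\in\sigma\}$ is the degree multiset of $H^*(BT^m)$, of $H^*(B\SU(n+1)\times T^m)$, or of $H^*(B\Sp(n)\times T^m)$ — one of the three listed possibilities.

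I expect the reduction step, the realisability of $\Z[\sigma]$, to be the main obstacle. Writing $\sigma=\bigcap_i\sigma_i$ with the $\sigma_i$ maximal simplices, one has $\Z[\sigma]=\bigcap_i\Z[\sigma_i]$ inside $SR(K,\phi)$; so I would first realise, for each maximal $\sigma_i$, the ring surjection $SR(K,\phi)\twoheadrightarrow\Z[\sigma_i]$ by a map of spaces $X_{\sigma_i}\to X$, and then take $X_\sigma$ to be the iterated homotopy pullback over $X$ of these maps, computing its cohomology by an Eilenberg--Moore argument in which the higher $\mathrm{Tor}$ groups vanish because the sum of the ideals generated by the vertices outside $\sigma_i$ and outside $\sigma_j$ is again the ideal of the vertices outside a face. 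The genuinely hard point is realising the surjection $SR(K,\phi)\twoheadrightarrow\Z[\sigma_i]$: it is a priori only a ring homomorphism, not compatible with the Steenrod operations, so one cannot simply split an idempotent self-map of $X$. I expect this to be handled one prime at a time after completion — successively excising the generators $x\notin\sigma_i$ from the cohomology by suitable fibrations in the rigid ($p$-compact group) setting, then reassembling via an arithmetic fracture square — and it is in performing this extraction while controlling torsion and the action of the Steenrod algebra that the bulk of the argument lies.
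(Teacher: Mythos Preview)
Your ``if'' direction is essentially the paper's construction: the same functor $P_{\max}(K)\to\Top$ sending $\sigma$ to a product of $BSU$, $BSp$, and $\CP^\infty$ factors connected by the standard inclusions, with the same inductive Mayer--Vietoris computation of the cohomology of the homotopy colimit.

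Your ``only if'' direction, however, takes a much harder route than necessary and contains a genuine gap. You try to \emph{realise} $\Z[\sigma]$ as $H^*(X_\sigma;\Z)$ for a space $X_\sigma$ built from $X$, so that you can invoke Andersen--Grodal. As you yourself flag, producing a map $X_{\sigma_i}\to X$ realising the ring surjection $SR(K,\phi)\twoheadrightarrow\Z[\sigma_i]$ is the crux, and the sketch you offer (excising generators one prime at a time in a $p$-compact setting, then reassembling by arithmetic fracture) is not a proof: there is no reason the kernel of this projection should be the image of any map into $X$, and the Eilenberg--Moore step for the iterated pullback also presupposes control you have not established.

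The paper bypasses all of this. It never realises $\Z[\sigma]$ as the cohomology of a space; it only shows that for every prime $p$ the quotient $\Z[\sigma]\otimes\Z/p$ \emph{inherits an unstable algebra structure over the mod-$p$ Steenrod algebra} from $H^*(X;\Z/p)$. This is purely algebraic: for a maximal simplex $\tau$ and $x\in V\setminus\tau$ one has $x\cdot\prod_{y\in\tau}y=0$ in $SR(K,\phi)$, and expanding $0=\Pe^i\bigl(x\cdot\prod_{y\in\tau}y\bigr)$ by the Cartan formula (with a minimality argument on $i$) forces $\Pe^i(x)\in(V\setminus\tau)$. Thus each ideal $(V\setminus\tau)$ is closed under the Steenrod operations, and hence so is $(V\setminus\sigma)$ for $\sigma=\bigcap_i\sigma_i\in P_{\max}(K)$, since $V\setminus\sigma=\bigcup_i(V\setminus\sigma_i)$. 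With this in hand one applies the Adams--Wilkerson and Aguad\'e classification of polynomial unstable algebras (together with a few low-prime calculations using Thomas's theorem and an Adem relation at $p=3$), not the full Andersen--Grodal realisability theorem. One residual family, $\{4,8,\dots,2^{n+2}-4,2^{n+1}\}\cup\{2,\dots,2\}$, survives the Steenrod-algebra analysis at every prime and is eliminated only by the hypothesis on pairs of generators of degree $2^i$ with $i\ge3$; this is why the theorem needs the assumption for all $i\ge2$, not merely $i=2$ as your argument would use.
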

This is the main theorem in this paper.
We believe that the unnatural assumption in the main theorem can be replaced by the following condition.
\begin{itemize}
    \item If generators $x,y\in V$ satisfies $\phi(x)=\phi(y)=4$, then $xy=0$ in $SR(K,\phi)$.
\end{itemize}
But now we have yet to prove the theorem under this condition.
The reason why the assumption required is in the latter part of this paper.

We can generalize the construction of a space $X$ with $H^*(X)$ being isomorphic to the graded Stanley-Reisner ring to a wider classes. The following theorem is proved in Section \ref{s_hocolim}.
\begin{theorem}\label{hocolim1}
    Let $SR(K,\phi)$ be the graded Stanley-Reisner ring  for simplicial complex $K$ with the vertex set $V$ and $\phi \colon V\rightarrow 2\Z_{>0}$.
    If $SR(K,\phi)$ satisfies the following condition, we can construct a space $X$ as a homotopy colimit such that $H^*(X)\cong SR(K,\phi)$. 
    \begin{itemize}
        \item There is a decomposition of $V$, $\coprod_{i}A_{i} = V$, such that for all $i$ and $\sigma\in P_{\max}(K)$ the set $\{\phi(x)\mid x\in \sigma \cap A_i \}$ is equal to $\{2,2,\dots , 2\}$, $\{4,6,\dots 2n\}\cup \{2,2,\dots , 2\}$ or $\{4,8,\dots 4n\}\cup \{2,2,\dots , 2\}$ as a multiset for some $n$.
    \end{itemize}
\end{theorem}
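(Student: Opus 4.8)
The plan is to realize $SR(K,\phi)$ as the integral cohomology ring of a homotopy colimit, over the poset $P_{\max}(K)$, of a diagram whose values are finite products of the classifying spaces $\CP^\infty=BS^1$, $B\SU(n)$ and $B\Sp(n)$ — a direct generalization of the Davis--Januszkiewicz construction, in which only tori appear. The backbone of the argument is the algebraic identity
\[
    SR(K,\phi)\ \cong\ \lim_{\sigma\in P_{\max}(K)}\Z[\sigma],
\]
where $\Z[\sigma]$ is the polynomial ring on the variables $x\in\sigma$ with $|x|=\phi(x)$, and for $\sigma<\tau$ in $P_{\max}(K)$ the transition map $\Z[\tau]\to\Z[\sigma]$ kills the variables in $\tau\setminus\sigma$ and fixes the others. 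I would deduce this from the standard fact that $SR(K,\phi)$ has $\Z$-basis the monomials supported on faces of $K$: the resulting injection $SR(K,\phi)\hookrightarrow\prod_{\sigma\in P_{\max}(K)}\Z[\sigma]$ has image the inverse limit, because $P_{\max}(K)$ is the closure of the set of maximal simplices under intersection, so that for each face $F\in K$ the up-set $\{\sigma\in P_{\max}(K)\mid F\subseteq\sigma\}$ has a least element, namely the intersection of all maximal simplices containing $F$.

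Next, fix a decomposition $V=\coprod_i A_i$ as in the hypothesis. For $\sigma\in P_{\max}(K)$ and each $i$, the multiset $\{\phi(x)\mid x\in\sigma\cap A_i\}$ has one of the three prescribed shapes, and I attach to $\sigma\cap A_i$ the space $(\CP^\infty)^{k}$, $B\SU(n)\times(\CP^\infty)^{k}$, or $B\Sp(n)\times(\CP^\infty)^{k}$ respectively, with $k$ the number of degree-$2$ vertices, $n$ read off from the shape, and the degenerate shape $\{4\}$ realized by $B\Sp(1)=\HP^\infty$ (using $\SU(2)\cong\Sp(1)$); write $B_{\sigma,i}$ for this space and put $B_\sigma=\prod_i B_{\sigma,i}$. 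As $H^*(B\SU(n);\Z)=\Z[c_2,\dots,c_n]$ with $|c_j|=2j$ and $H^*(B\Sp(n);\Z)=\Z[p_1,\dots,p_n]$ with $|p_j|=4j$ are polynomial and torsion free, Künneth supplies an isomorphism $H^*(B_\sigma;\Z)\cong\Z[\sigma]$, which I pin down once and for all: a degree-$2j$ vertex goes to $c_j$ of the relevant $\SU$-factor, a degree-$4j$ vertex to $\varepsilon_j\,p_j$ of the relevant $\Sp$-factor, and each degree-$2$ vertex to a dedicated $\CP^\infty$-factor (the same one in every $B_\sigma$ containing that vertex), where $\varepsilon_j\in\{\pm1\}$ is the sign in the map induced on cohomology by $\Sp(m)\subset\SU(2m)$, which sends odd Chern classes to $0$ and $c_{2j}$ to $\varepsilon_j\,p_j$.

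For $\sigma<\tau$ in $P_{\max}(K)$ we have $\sigma\cap A_i\subseteq\tau\cap A_i$ for every $i$, so it suffices to construct, for each $i$, a map $B_{\sigma,i}\to B_{\tau,i}$ inducing on cohomology the projection that kills the generators indexed by $(\tau\setminus\sigma)\cap A_i$. A short combinatorial check — using that the two nontrivial shapes are the ``intervals'' $\{4,6,\dots,2n\}$ and $\{4,8,\dots,4n\}$ — shows that, up to the low-rank coincidence $\SU(2)\cong\Sp(1)$, the only cases are a torus mapped into a torus factor, $B\SU(m)\hookrightarrow B\SU(n)$ and $B\Sp(m)\hookrightarrow B\Sp(n)$ with $m\le n$, and $B\Sp(m)\hookrightarrow B\SU(n)$ with $2m\le n$, induced respectively by $T^a\subset T^b$, $\SU(m)\subset\SU(n)$, $\Sp(m)\subset\Sp(n)$, and $\Sp(m)\subset\SU(2m)\subset\SU(n)$. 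With the signs $\varepsilon_j$ already built into the chosen isomorphisms, each of these induces exactly the Stanley--Reisner transition map $\Z[\tau]\to\Z[\sigma]$. Since composites of standard subgroup inclusions are again of this kind, after passing to strict models (bar constructions) $\sigma\mapsto B_\sigma$ becomes an honest functor $P_{\max}(K)\to\Top$ whose composite with $H^*(-;\Z)$ is isomorphic, as a diagram of graded rings, to $\sigma\mapsto\Z[\sigma]$. Setting $X=\hocolim_{\sigma\in P_{\max}(K)}B_\sigma$, I claim the natural ring map $H^*(X;\Z)\to\lim_\sigma H^*(B_\sigma;\Z)$ is an isomorphism: in the Bousfield--Kan spectral sequence $E_2^{s,t}=\lim^{s}_{\sigma}H^t(B_\sigma;\Z)$ the higher derived limits vanish, because — writing $\Z[\sigma]=\bigoplus_{F\subseteq\sigma}M_F$ with $M_F$ spanned by the monomials of support $F$ — the diagram $\sigma\mapsto\Z[\sigma]$ is the direct sum, over faces $F$ of $K$, of the elementary diagrams that equal the constant functor $M_F$ on $\{\sigma\mid F\subseteq\sigma\}$ and $0$ outside, each of which is limit-acyclic in positive degrees since that poset has a least element. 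Hence $H^*(X;\Z)\cong\lim_\sigma\Z[\sigma]\cong SR(K,\phi)$ as graded rings.

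The step I expect to demand the most care is the global coherence of the spatial diagram in the third paragraph: the generators of each $H^*(B_\sigma;\Z)$ and the signs $\varepsilon_j$ must be chosen simultaneously so that every composable pair $B_\rho\to B_\sigma\to B_\tau$ commutes strictly and the induced diagram of rings is literally $\sigma\mapsto\Z[\sigma]$. This is routine but error-prone bookkeeping, and it goes through precisely because each vertex lies in a single block $A_i$ and its degree pins down unambiguously which Chern class, Pontryagin class, or torus factor it corresponds to, leaving no conflicting demand on any generator. The remaining technical point, the vanishing of $\lim^{s}$ for $s>0$, is handled uniformly by the support decomposition above — and this is exactly why the construction is indexed by $P_{\max}(K)$ rather than by the full face poset, since it is for $P_{\max}(K)$ that every up-set of a face has a least element. (Alternatively one can avoid the spectral sequence and build $X$ by an iterated homotopy pushout over $P_{\max}(K)$, in the style of polyhedral products, computing $H^*(X;\Z)$ by a Mayer--Vietoris induction.)
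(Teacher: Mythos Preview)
Your construction of the diagram $P_{\max}(K)\to\Top$ is essentially identical to the paper's: the same spaces $B_\sigma$ built from products of $B\SU$, $B\Sp$ and $\CP^\infty$, and the same maps coming from the standard inclusions $\SU(m)\subset\SU(n)$, $\Sp(m)\subset\Sp(n)$, $\Sp(m)\subset\SU(2m)$ (the paper's $\iota_1,\iota_2,\iota_3$); the paper disposes of your sign bookkeeping by simply \emph{defining} the generators of $H^*(B\Sp(n))$ to be the restrictions $\iota_3^*(c_{2i})$. Where you differ is in computing $H^*(\hocolim)$. The paper takes precisely the alternative you mention in your last parenthetical: it proves a general pushout lemma (removing a maximal element of the indexing poset yields a homotopy pushout square) and a Mayer--Vietoris lemma (the cohomology of the pushout of two pieces with Stanley--Reisner cohomology is the Stanley--Reisner ring of the union), and then argues by induction on the size of the indexing poset. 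Your route through the Bousfield--Kan spectral sequence and the support decomposition $\Z[\sigma]=\bigoplus_{F\subseteq\sigma}M_F$ is a genuinely different, more global argument: it identifies the target ring as $\lim_\sigma\Z[\sigma]$ up front and then shows the edge map is an isomorphism by killing $\lim^s$ for $s>0$. The paper's approach is more elementary (no spectral sequences or derived limits) and stays closer to the Davis--Januszkiewicz style; yours makes transparent exactly why $P_{\max}(K)$, rather than the full face poset, is the correct indexing category, and the support decomposition gives a clean, reusable reason for the derived-limit vanishing.
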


In the first half of this paper, Sections \ref{s_preliminary}, \ref{s_hocolim}, \ref{s_example}, we construct a space $X$ with $H^*(X)$ isomorphic to a graded Stanley-Reisner ring and prove Theorem \ref{hocolim1}.
In the latter half, Sections \ref{s_Steenrod}, \ref{s_requirement}, we obtain the necessary condition that graded Stanley-Reisner rings occur as the cohomology ring of a space.
At last, by combining these results we prove the main theorem in Section \ref{s_proof}.

\section*{Acknowledgement}

The author is grateful to Donald Stanley for suggesting this issue and for valuable advice. 
The author is supported by JSPS KAKENHI Grant Numbers 21J10117.


\section{Homotopy colimit}\label{s_preliminary}

In this section we recall a homotopy colimit and prove some lemmas we will use.

Let $P$ be a poset.
The order complex of $P$, $\Delta (P)$, is a simplicial complex whose faces are totally ordered subsets in $P$.
We regard $P$ as a category.
For a functor $F\colon P\rightarrow \mathrm{Top}$, the homotopy colimit is defined as the following
\[
    \hocolim_{P} F = \coprod_{\sigma =(x_1<x_2<\dots < x_k) \in \Delta(P)} |\sigma| \times F(x_k)/ \sim,
\]
where the equivalence is $(\iota (x),y)\sim (x,F(\iota)(y))$ for $\iota \colon \tau \hookrightarrow \sigma$ and $x\in |\tau|, y \in F(\max(\sigma) )$.

We denote $P_{<a}=\{p\in P\mid p<a\}$ and $P_{\leq a}=\{p\in P\mid p\leq a\}$ for some $a\in P$.
\begin{lemma}\label{pushout}
    Let $(P,<)$ be a finite poset and $F\colon P\to \mathrm{Top}$ be a functor.
    Let $a\in P$ be a maximal element.
    Then there is a pushout diagram 
    \[
        \xymatrix{
            \hocolim_{P_{<a}} F \ar[r] \ar[d] & \hocolim_{ P\backslash \{a\}} F \ar[d]\\
            \hocolim_{P_{\leq a}} F \ar[r]& \hocolim_{P} F,
        }
    \]
    where for a subset $P' \subset P$ $\hocolim_{P'} F$ means the homotopy colimit of the functor $F|_{P'}\colon P' \to Top$.
\end{lemma}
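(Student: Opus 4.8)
The plan is to work directly with the point-set model of the homotopy colimit recalled above, reducing the lemma to an elementary observation about the order complex. Since $a$ is a maximal element of $P$, any chain $\sigma\in\Delta(P)$ that contains $a$ must have $a$ as its largest element, so $\sigma\subseteq P_{\leq a}$ and $F(\max\sigma)=F(a)$. Consequently
\[
    \Delta(P)=\Delta(P\backslash\{a\})\cup\Delta(P_{\leq a}),\qquad \Delta(P\backslash\{a\})\cap\Delta(P_{\leq a})=\Delta(P_{<a}),
\]
and all four homotopy colimits in the square are obtained as the quotient by $\sim$ of the sub-coproducts of $\coprod_{\sigma\in\Delta(P)}|\sigma|\times F(\max\sigma)$ indexed, respectively, by $\Delta(P_{<a})$, $\Delta(P_{\leq a})$, $\Delta(P\backslash\{a\})$ and $\Delta(P)$. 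The four maps in the diagram are the evident maps attached functorially to these inclusions of sub-posets, so the square commutes on the nose; what remains is to check the universal property of the pushout.

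To do this I would argue as follows. Given a space $Z$ together with maps $g\colon\hocolim_{P\backslash\{a\}}F\to Z$ and $h\colon\hocolim_{P_{\leq a}}F\to Z$ that agree after restriction to $\hocolim_{P_{<a}}F$, define $f\colon\hocolim_P F\to Z$ on the class of a pair $(x,y)$ with $x\in|\sigma|$ and $y\in F(\max\sigma)$ by applying $g$ when $a\notin\sigma$ and applying $h$ when $a\in\sigma$ (so that $\sigma\subseteq P_{\leq a}$). Every $\sigma\in\Delta(P)$ falls under one of these two cases, and when $\sigma\in\Delta(P_{<a})$ the two recipes agree by hypothesis, so $f$ is a well-defined function. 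That $f$ respects the relation $\sim$ is seen by inspecting each generating identification $(\iota(x),y)\sim(x,F(\iota)(y))$ attached to a face inclusion $\iota\colon\tau\hookrightarrow\sigma$: if $a\in\tau$ then also $a\in\sigma$ and this identification already holds inside $\hocolim_{P_{\leq a}}F$; if $a\notin\sigma$ it holds inside $\hocolim_{P\backslash\{a\}}F$; and in the remaining case $a\notin\tau$ but $a\in\sigma$ we have $\tau\in\Delta(P_{<a})\subseteq\Delta(P_{\leq a})$ and $\sigma\in\Delta(P_{\leq a})$, so the identification again holds inside $\hocolim_{P_{\leq a}}F$. Continuity of $f$ is then immediate: $\hocolim_P F$ carries the quotient topology from $\coprod_{\sigma\in\Delta(P)}|\sigma|\times F(\max\sigma)$, and the composite of $f$ with the canonical map out of each summand is by construction $g$ or $h$ followed by a continuous map. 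Uniqueness of $f$ is clear because the images of $\hocolim_{P\backslash\{a\}}F$ and $\hocolim_{P_{\leq a}}F$ cover $\hocolim_P F$.

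I do not expect a genuine conceptual obstacle here; the only delicate point is the point-set bookkeeping, namely keeping the four index sets straight and checking that every generating relation of $\sim$ in $\hocolim_P F$ is already present in one of the two spaces mapping into $Z$. Arguing through the universal property, rather than phrasing the statement as ``$\hocolim_P F$ is the union of two subspaces glued along their intersection'', has the advantage that it uses only the quotient topology and the continuity of $g$ and $h$, and so avoids any separate verification that this cover of $\hocolim_P F$ is topologically well behaved.
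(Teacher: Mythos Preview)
Your argument is correct and rests on the same combinatorial observation as the paper's: because $a$ is maximal, $\Delta(P)=\Delta(P\backslash\{a\})\cup\Delta(P_{\leq a})$ with intersection $\Delta(P_{<a})$. The paper packages this differently: it simply records that $\hocolim_{P\backslash\{a\}}F$ and $\hocolim_{P_{\leq a}}F$ are subspaces of $\hocolim_P F$ whose union is the whole space and whose intersection is $\hocolim_{P_{<a}}F$, and then notes that the two inclusions out of $\hocolim_{P_{<a}}F$ are cofibrations. You instead verify the universal property of the pushout by hand. Your route is more self-contained and avoids appealing to a general ``union along intersection is a pushout'' fact; the paper's route is terser but relies on that background. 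One point worth noting: the paper's extra remark that the inclusions are cofibrations is not needed for the bare pushout statement, but it is what makes the square a \emph{homotopy} pushout, which is what is actually used downstream when Mayer--Vietoris is invoked. Your proof establishes the strict pushout as stated; if you later need the Mayer--Vietoris sequence, you will still want to observe that at least one of the inclusions is a cofibration.
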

\begin{proof}
    By the definition of homotopy colimit, we obtain that 
    \begin{align*}
        \hocolim_{ P\backslash \{a\}} F\bigcup \hocolim_{P_{\leq a}} F &= \hocolim_{P} F\\
        \hocolim_{ P\backslash \{a\}} F\bigcap \hocolim_{P_{\leq a}} F &= \hocolim_{P_{<a}} F
    \end{align*}
    And the inclusions 
    \begin{align*}
        \hocolim_{P_{<a}} F &\hookrightarrow \hocolim_{ P\backslash \{a\}} F\\
        \hocolim_{P_{<a}} F &\hookrightarrow \hocolim_{ P\leq \{a\}} F
    \end{align*}
    are cofibrations. By combining these we obtain this lemma.
\end{proof}

Next, we see the relation between the homotopy pushout and the graded Stanley-Reisner ring.
For a subcomplex $K' \subset K$, let $V(K')$ be the vertex set of $K'$.
\begin{lemma}\label{kernel}
    Let $K$ be a simplicial complex with the vertex set $V$, and $\phi \colon V\rightarrow 2\Z_{>0}$.
    Let $K_1,K_2$ be subcomplexes of $K$. 
    We assume the following:
    \begin{itemize}
        \item There is a space $X$ with $H^*(X)\cong SR(K_1\cap K_2, \phi)$.
        \item There are spaces $X_i$ with $H^*(X_i)\cong SR(K_i,\phi)$ for $i=1,2$.
        \item There are maps $\pi_i \colon X\rightarrow X_i$ such that $\pi_i^*$ is identified with the natural projection $SR(K_i,\phi) \rightarrow SR(K_1\cap K_2,\phi)$ for $i=1,2$ in cohomology.
    \end{itemize}
    Then the cohomology ring of the homotopy pushout of the following diagram 
    \[
        \xymatrix{ X \ar^{\pi_1}[r]\ar^{\pi_2}[d] & X_1 \\
            X_2}
    \]
    is isomorphic to $SR(K_1\cup K_2,\phi)$.
\end{lemma}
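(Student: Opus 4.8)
The plan is to apply the Mayer–Vietoris sequence to the homotopy pushout $Y$ of the diagram $X_1 \xleftarrow{\pi_1} X \xrightarrow{\pi_2} X_2$, and to identify the resulting cohomology ring with $SR(K_1\cup K_2,\phi)$ via the combinatorial fact that $SR(K_1\cup K_2,\phi)$ is itself the pullback of $SR(K_1,\phi)\to SR(K_1\cap K_2,\phi)\leftarrow SR(K_2,\phi)$ in graded rings. First I would record this algebraic statement: since the Stanley–Reisner ideal of $K_1\cup K_2$ is the intersection $I(K_1)\cap I(K_2)$ inside $\Z[V]$ (a monomial $x_{i_1}\cdots x_{i_k}$ is not a face of $K_1\cup K_2$ iff it is not a face of $K_1$ and not a face of $K_2$), one gets a short exact sequence of graded abelian groups
\[
    0 \to SR(K_1\cup K_2,\phi) \to SR(K_1,\phi)\oplus SR(K_2,\phi) \xrightarrow{\;p_1-p_2\;} SR(K_1\cap K_2,\phi)\to 0,
\]
where the surjectivity on the right uses that $V(K_1\cap K_2)=V(K_1)\cap V(K_2)$ together with $I(K_1\cap K_2)=I(K_1)+I(K_2)$, so every monomial of $SR(K_1\cap K_2,\phi)$ lifts (e.g.\ through either $p_i$). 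In particular $SR(K_1\cup K_2,\phi)$ is the fiber product of the two projections.

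Next I would write down the Mayer–Vietoris sequence of the pushout. The maps $\pi_1,\pi_2$ induce on cohomology exactly the surjections $SR(K_i,\phi)\to SR(K_1\cap K_2,\phi)$ by hypothesis, so the difference map $H^*(X_1)\oplus H^*(X_2)\to H^*(X)$ is surjective in every degree. Hence the connecting homomorphism vanishes, the long exact sequence breaks into short exact sequences
\[
    0 \to H^*(Y) \to H^*(X_1)\oplus H^*(X_2) \to H^*(X) \to 0,
\]
and comparing with the algebraic short exact sequence above — the middle and right terms agree as graded rings by the three hypotheses, and the maps agree — gives a graded-group isomorphism $H^*(Y)\cong SR(K_1\cup K_2,\phi)$. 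The content that remains is that this isomorphism is multiplicative: the restriction map $H^*(Y)\to H^*(X_1)\oplus H^*(X_2)$ is a ring homomorphism and is injective, and the inclusion $SR(K_1\cup K_2,\phi)\hookrightarrow SR(K_1,\phi)\oplus SR(K_2,\phi)$ is a ring homomorphism onto the same subring; since a subring structure is determined by the ambient ring, the two coincide, so $H^*(Y)\cong SR(K_1\cup K_2,\phi)$ as rings.

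The main obstacle is the surjectivity/exactness bookkeeping rather than any deep input: one must make sure the hypotheses genuinely force the Mayer–Vietoris connecting map to be zero in all degrees (this is where "$\pi_i^*$ is identified with the natural projection" is used, and one should check the projections are degreewise surjective, which is immediate since they are ring surjections hitting all the monomial generators). A secondary point to handle with care is the identification of ring structures through the Mayer–Vietoris map: the cup product on $H^*(Y)$ is compatible with restriction to $X_1\amalg X_2$ because restriction along $Y\leftarrow X_1\amalg X_2$ is a ring map, so once injectivity of this restriction is established the multiplicative statement is automatic — but one should state explicitly that injectivity comes from exactness on the left of the short exact sequence. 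No step requires computing any products by hand.
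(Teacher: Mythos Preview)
Your argument is correct and follows essentially the same route as the paper: use surjectivity of $\pi_i^*$ to kill the Mayer--Vietoris connecting homomorphism, then identify the resulting kernel of $SR(K_1,\phi)\oplus SR(K_2,\phi)\to SR(K_1\cap K_2,\phi)$ with $SR(K_1\cup K_2,\phi)$. Your identification via the ideal equalities $I(K_1\cup K_2)=I(K_1)\cap I(K_2)$ and $I(K_1\cap K_2)=I(K_1)+I(K_2)$ is a clean repackaging of the paper's more hands-on description of the kernel in terms of generators, and your explicit handling of the ring structure (via injectivity of restriction) is a point the paper leaves implicit.
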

\begin{proof}
    Since the maps $\pi_1^*,\pi_2^*$ are surjective in cohomology, 
    by Mayer-Vietoris sequence, the cohomology of the homotopy pushout is the kernel of the map 
    \[
        SR(K_1,\phi)\oplus SR(K_2,\phi)\rightarrow SR(K_1\cap K_2,\phi) \quad x\oplus y \mapsto \pi_1^*(x)-\pi_2^*(y),
    \]
    for $x \in SR(K_1,\phi)$ and $y\in SR(K_2,\phi)$.
    The kernel of this map is the direct sum of the kernel of $\pi_1^*$, the kernel of $\pi_2^*$ and $\{x\oplus x \in SR(K_1,\phi) \oplus SR(K_2,\phi)\mid x\in SR(K_1\cap K_2,\phi)\}$, where $SR(K_1\cap K_2,\phi)$ is regarded as a submodule of $SR(K_i,\phi)$ naturally.
    So we can take the disjoint union of the set $V(K_1)\setminus V(K_1)\cap V(K_2)$, $V(K_2)\setminus V(K_1)\cap V(K_2)$ and 
    $$\{x\oplus x \mid x\in V(K_1)\cap V(K_2)\}$$
    as a ring generator of the kernel, and the ring is isomorphic to $SR(K_1\cup K_2,\phi)$.
\end{proof}



\section{Construction of homotopy colimit}\label{s_hocolim}

In this section we construct a homotopy colimit representation of a space $X$ with $H^*(X)\cong SR(K,\phi)$ for some graded Stanley-Reisner rings $SR(K,\phi)$.
This construction is an analogy to the construction in \cite{DJ}.
The Davis-Januskiewicz space that first appeared in \cite{DJ} is constructed by the union of the products of complex projective spaces.
As far as looking at cohomology, our construction is like a graded version of their construction.

\subsection{Maps between the classifying spaces of Lie groups}
We define maps between the classifying spaces of Lie groups and see some properties.
Let $$\iota_1 \colon SU(n)\rightarrow SU(n+1)$$ be the inclusion $\iota_1(A)=A\oplus 1$ for $A\in SU(n)$, and $$\iota_2 \colon Sp(n)\rightarrow Sp(n+1)$$ be the inclusion $\iota_2(A)=A\oplus 1$ for $A\in Sp(n)$.
For the quaternion $\Hq$ and the set of complex $2\times2$-matrixes $M(2,\C)$, let $c \colon \Hq \rightarrow M(2,\C)$ be the map $c(z+jw)=\begin{pmatrix} z & -\bar{w} \\ w & \bar{z} \end{pmatrix}$ for $z,w\in \C$.
Let $$\iota_3 \colon Sp(n)\rightarrow SU(2n)$$ be the map such that for $A=(a_{i,j})_{ij}\in Sp(n)$, 
\[
    \iota_3(A)=\begin{pmatrix}
        c(a_{1,1}) & c(a_{1,2}) & \dots \\
        c(a_{2,1}) & c(a_{2,2}) & \\
        \vdots & & \ddots
    \end{pmatrix}\in SU(2n).
\]
Since $\iota_i$ is a homomorphism, $\iota_i$ induces the map between classifying map.
We denote these maps as same symbol $\iota_i$.
Since the following diagram is commutative
\[
    \xymatrix{
        Sp(n) \ar^{\iota_2}[r]\ar^{\iota_3}[d] & Sp(n+1)\ar^{\iota_3}[d]\\
        SU(2n) \ar^{\iota_1}[r]& SU(2n+2),
    }
\]
there is a commutative diagram
\[
    \xymatrix{
        BSp(n) \ar^{\iota_2}[r]\ar^{\iota_3}[d] & BSp(n+1)\ar^{\iota_3}[d]\\
        BSU(2n) \ar^{\iota_1}[r]& BSU(2n+2).
    }
\]

We recall the cohomology of these classifying spaces. There is an isomorphism 
\[
    H^*(BSU(n))\cong \Z[c_2,c_3,\dots c_n],
\]
where $c_i$ is the $i$-th Chern class.
By degree reason, we obtain $\iota_{3}^*(c_{2n+1})=0$, and the next lemma holds.
\begin{lemma}[cf. Chapter III, Theorem 5.8 in \cite{TM}]
  There is an isomorphism 
  \[H^*(BSp(n);\Z)\cong \Z[\iota_3^*(c_2),\iota_3^*(c_4),\dots \iota_3^*(c_{2n})].\]
\end{lemma}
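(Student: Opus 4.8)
The plan is to reduce the statement to Borel's classical description of $H^*(BSp(n);\Z)$ via the maximal torus, and then to identify the classes $\iota_3^*(c_{2i})$ by a short symmetric-function computation.

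First I would recall the standard fact (which is essentially what the cited theorem records) that $H^*(BSp(n);\Z)$ is torsion-free and vanishes outside degrees divisible by $4$, and that for the diagonal maximal torus $T^n\subset Sp(n)$, with $H^*(BT^n;\Z)=\Z[y_1,\dots,y_n]$ and $|y_j|=2$, the restriction map $H^*(BSp(n);\Z)\to H^*(BT^n;\Z)$ is injective with image the ring of invariants of the Weyl group $W$, which acts on $y_1,\dots,y_n$ by signed permutations. One then identifies this invariant ring: a polynomial fixed by all the sign changes $y_j\mapsto -y_j$ is exactly a polynomial in $y_1^2,\dots,y_n^2$, and among those the ones fixed by the permutation subgroup $\Sn_n$ are precisely the symmetric polynomials in $y_1^2,\dots,y_n^2$, so by the fundamental theorem of symmetric functions
\[
    H^*(BSp(n);\Z)\;\cong\;\Z[y_1,\dots,y_n]^{W}\;=\;\Z\bigl[e_1(y_1^2,\dots,y_n^2),\dots,e_n(y_1^2,\dots,y_n^2)\bigr],
\]
a polynomial ring on $n$ generators in degrees $4,8,\dots,4n$.

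Next I would compute the restrictions of the classes in question. The map $\iota_3$ factors through the inclusion $Sp(n)\hookrightarrow U(2n)$ obtained by regarding $\Hq^n$ as $\C^{2n}$; on maximal tori this is the map $T^n\to T^{2n}$, $(t_1,\dots,t_n)\mapsto(t_1,\bar t_1,\dots,t_n,\bar t_n)$, so on cohomology the standard generators $x_1,\dots,x_{2n}$ of $H^*(BT^{2n};\Z)=\Z[x_1,\dots,x_{2n}]$ pull back by $x_{2j-1}\mapsto y_j$ and $x_{2j}\mapsto -y_j$. Since $c_k$ pulls back to the elementary symmetric polynomial $e_k(x_1,\dots,x_{2n})$ on $BT^{2n}$, and $c_k\in H^*(BSU(2n);\Z)$ is the image of $c_k\in H^*(BU(2n);\Z)$, we get $\iota_3^*(c_k)|_{BT^n}=e_k(y_1,-y_1,\dots,y_n,-y_n)$. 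From
\[
    \sum_{k\ge 0}e_k(y_1,-y_1,\dots,y_n,-y_n)\,t^k \;=\; \prod_{j=1}^{n}(1+y_jt)(1-y_jt)\;=\;\prod_{j=1}^{n}(1-y_j^2t^2)
\]
this vanishes for odd $k$ and equals $(-1)^i e_i(y_1^2,\dots,y_n^2)$ for $k=2i$. Feeding this into the isomorphism of the previous paragraph shows $\iota_3^*(c_{2i})=(-1)^i e_i(y_1^2,\dots,y_n^2)$, hence $\iota_3^*(c_2),\dots,\iota_3^*(c_{2n})$ are a set of polynomial generators of $H^*(BSp(n);\Z)$, which is the claim. (As a byproduct $\iota_3^*(c_{2i+1})=0$ for every $i$, sharpening the degree-reason remark preceding the lemma.)

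The only genuinely non-formal ingredients are Borel's injectivity-and-invariants theorem and the torsion-freeness of $H^*(BSp(n);\Z)$; these are exactly what the cited reference supplies, so in that sense the ``hard part'' is external input and everything else is the routine generating-function identity above. If one preferred a self-contained argument, the alternative would be to reprove $H^*(BSp(n);\Z)\cong\Z[q_1,\dots,q_n]$ by induction on $n$ via the Gysin sequence of the sphere bundle $S^{4n-1}\to BSp(n-1)\to BSp(n)$, and then match $q_i$ with $\iota_3^*(c_{2i})$ by restricting along the chain of inclusions $\iota_2$ down to $BSp(1)=BSU(2)$; this works but is longer, whereas the maximal-torus computation is clean and immediate.
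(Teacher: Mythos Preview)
Your argument is correct, and the generating-function identification of $\iota_3^*(c_{2i})$ with $(-1)^i e_i(y_1^2,\dots,y_n^2)$ via the maximal torus is exactly the standard route to this statement. The paper itself gives no proof: the lemma is stated with a bare citation to Mimura--Toda and is used only to fix a convenient set of polynomial generators for $H^*(BSp(n);\Z)$, so there is nothing to compare your approach against beyond noting that what you wrote is essentially what one finds in the cited reference.
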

In this paper we take the generators of $H^*(BSp(n))$ as in this lemma.
Then there are equations for $\iota_1\colon BSU(n) \rightarrow BSU(n+1)$ and $\iota_2\colon BSp(n) \rightarrow BSp(n+1)$(cf. Chapter III in \cite{TM})
\[
      \iota_1^*(c_i)=\begin{cases}
          c_i \quad &i\leq n\\
          0 \quad &i=n+1,
      \end{cases} \quad 
      \iota_2^*(\iota_3^*(c_{2i}))=\begin{cases}
        \iota_3^*(c_{2i}) \quad &i\leq n\\
          0 \quad &i=n+1.
    \end{cases}  
\]
In summary, $\iota_1,\iota_2$ and $ \iota_3$ are the maps that send each generator to its corresponding generator or $0$ in cohomology. 

\subsection{Construction}
We define a functor by using the maps $\iota_1,\iota_2$ and $ \iota_3$.
Let $K$ be a simplicial complex with the vertex set $V$, and $\phi \colon V\rightarrow 2\Z_{>0}$ satisfying the following condition.
\begin{itemize}
    \item There is a decomposition of $V$, $\coprod_{i}A_{i} = V$, such that for all $i$ and $\sigma\in P_{\max}(K)$, the set $\{\phi(x)\mid x\in \sigma \cap A_i \}$ is equal to $\{2,2,\dots , 2\}$, $\{4,6,\dots 2n+2\}\cup\{2,\dots ,2\}$ or $\{4,8,\dots 4n\}\cup\{2,\dots ,2\}$ as a multiset for some $n$.
\end{itemize}
The simplicial complex $K$ can be regard as a poset by inclusions.
Then we define a subposet $P\subset K$ satisfying the following:
\begin{itemize}
    \item $P_{\max}(K)\subset P$.
    \item For any $\sigma \in P$ and $i$, the set $\{\phi(x)\mid x\in \sigma \cap A_i \}$ is equal to $\{2,2,\dots , 2\}$, $\{4,6,\dots 2n+2\}\cup\{2,\dots ,2\}$ or $\{4,8,\dots 4n\}\cup\{2,\dots ,2\}$ as a multiset for some $n$.
\end{itemize}
Then we regard the poset $P$ as a category and we define a functor $F\colon P \rightarrow \Top$.
For $\sigma \in K$, $$X_\sigma=BSp(n)\times \prod_{\{x\in \sigma \mid \phi(x)=2\}}\CP^{\infty}$$ when $\{\phi(x)\mid x\in \sigma \}=\{4,8,\dots ,4n\}\cup\{2, \dots 2\}$, $$X_\sigma=BSU(n+1)\times \prod_{\{x\in \sigma \mid \phi(x)=2\}}\CP^{\infty}$$ when $\{\phi(x)\mid x\in \sigma \}=\{4,6,\dots ,2n+2\}\cup\{2, \dots 2\}$, and $X_{\sigma}$ is a point when $\sigma$ is the empty set. 
For $\sigma \subset \tau \in K$ let 
\[\iota\colon \prod_{\{x\in \sigma \mid \phi(x)=2\}}\CP^{\infty}\rightarrow \prod_{\{x\in \tau \mid \phi(x)=2\}}\CP^{\infty}\]
be the inclusion such that each vertex corresponds to the same vertex.
Then let $f_{\sigma , \tau}\colon X_{\sigma} \rightarrow X_{\tau}$ be the map constructed by the product of the composition of $\iota_1,\iota_2$ and $ \iota_3$ between $BSU(n)$ and $BSp(n)$, and $\iota$ between the products of $\CP^{\infty}$.
We define a functor $F\colon P \rightarrow \Top$ as the followings.
\begin{itemize}
    \item For $\sigma \in P$, $F(\sigma )=\prod_{i}X_{\sigma\cap A_i}$.
    \item For $\sigma, \tau \in P$ with $\sigma \subset \tau$, the map between $F(\sigma) \rightarrow F(\tau)$ is defined by the product 
    \[\prod_{i} f_{\sigma\cap A_i, \tau\cap A_i}\colon \prod_i X_{\sigma\cap A_i}\rightarrow \prod_i X_{\tau\cap A_i}.\]
\end{itemize}
We define $X=\hocolim_{P} F$, then the following lemma holds.
\begin{lemma}\label{construction_pre}
    Under the above notation, the cohomology ring of $X$ is isomorphic to $SR(K,\phi)$.
\end{lemma}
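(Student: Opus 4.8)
The plan is an induction in the style of Davis--Januszkiewicz, assembling $\hocolim_P F$ one maximal poset-element at a time via Lemma \ref{pushout} and computing each step with Lemma \ref{kernel}. For a subposet $Q\subseteq P$ write $K_Q=\bigcup_{\sigma\in Q}\Delta^{\sigma}$ for the subcomplex of $K$ generated by the simplices in $Q$. I will prove, by induction on $|Q|$, the strengthened statement: there is an isomorphism $H^*(\hocolim_Q F)\cong SR(K_Q,\phi)$ that sends the cohomology class of $F(\sigma)$ associated to a vertex $x$ (coming from the $\CP^{\infty}$-factor indexed by $x$, or from the matching Chern-class generator of the $BSU$/$BSp$-factor) to the Stanley--Reisner generator $x$, and that is natural, in the sense that for $Q'\subseteq Q$ the map $\hocolim_{Q'}F\to\hocolim_Q F$ induces the canonical surjection $SR(K_Q,\phi)\to SR(K_{Q'},\phi)$. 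Taking $Q=P$ then gives the lemma, because $P\supseteq P_{\max}(K)$ contains every maximal simplex of $K$ while each $\Delta^{\sigma}$ is a subcomplex of $K$, so $K_P=K$. The only feature of $P$ I use beyond the stated bullets is that it contains the pairwise intersections occurring below; this holds for $P=P_{\max}(K)$, which is closed under intersection of its members as subsets of $V$.

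For the base cases $Q=\varnothing$ and $Q=\{\sigma\}$ one has $\hocolim_Q F=F(\sigma)=\prod_i X_{\sigma\cap A_i}$, and since the cohomologies of $BSU(n)$, $BSp(n)$ and $\CP^{\infty}$ are torsion-free polynomial rings, the K\"unneth theorem identifies $H^*(F(\sigma))$ with the polynomial ring on $\sigma$ with $|x|=\phi(x)$, that is $SR(\Delta^{\sigma},\phi)$ --- here the description of $H^*(BSp(n))$ recalled above is what makes the multiset of generator degrees of $H^*(X_{\sigma\cap A_i})$ equal to $\{\phi(x)\mid x\in\sigma\cap A_i\}$. For the inductive step, pick a maximal element $a$ of $Q$ and apply Lemma \ref{pushout}. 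Maximality of $a$ puts every element $<a$ of $Q$ into $Q\setminus\{a\}$, so $Q_{<a}$ and $Q_{\le a}$ are the genuine down-sets; $Q_{\le a}$ has greatest element $a$, so $\hocolim_{Q_{\le a}}F\simeq F(a)$ with cohomology $SR(\Delta^{a},\phi)=SR(K_{Q_{\le a}},\phi)$. The induction hypothesis applies to $Q\setminus\{a\}$ and to $Q_{<a}$. The combinatorial point is the identity $K_{Q_{<a}}=K_{Q\setminus\{a\}}\cap\Delta^{a}$: the inclusion $\subseteq$ is immediate, and for $\supseteq$ a face on the right lies in some $\Delta^{p}$ with $p\in Q$, $p\ne a$, hence in $\Delta^{p\cap a}$, and $p\cap a\in Q_{<a}$ because $p\cap a\in Q$ (closure) and $p\cap a\subsetneq a$ (otherwise $a<p$, against maximality). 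Lemma \ref{kernel} now applies with $K_1=K_{Q\setminus\{a\}}$, $K_2=\Delta^{a}$, intersection space $\hocolim_{Q_{<a}}F$, and $\pi_1,\pi_2$ the structure maps of the square of Lemma \ref{pushout}; by the naturality clause these $\pi_i$ induce the required Stanley--Reisner projections, so $H^*(\hocolim_Q F)\cong SR(K_1\cup K_2,\phi)=SR(K_Q,\phi)$. Running the Mayer--Vietoris sequence while tracking the explicit generating set exhibited in the proof of Lemma \ref{kernel} re-establishes the generator and naturality clauses, closing the induction.

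The step I expect to cost the most is verifying that the strengthened hypothesis is genuinely self-improving: that the isomorphism produced by Lemma \ref{kernel} sends the class of the factor indexed by $x$ exactly to the generator $x$ and is natural for sub-poset inclusions, so that at the next stage the maps $\pi_i$ are literally \emph{the} Stanley--Reisner projections and not merely some surjections. This rests on the fact established above that $\iota_1$, $\iota_2$, $\iota_3$ and the coordinate inclusions of products of $\CP^{\infty}$ each send a polynomial generator to the corresponding generator or to $0$, so that every $f_{\sigma,\tau}$ --- hence $F$ applied to each relation $\sigma\subset\tau$ in $P$ --- realizes exactly the projection between the relevant polynomial rings; one then checks that these compatibilities propagate through the iterated pushouts. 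A secondary issue, and the place where the precise hypothesis on $P$ must be pinned down, is the identity $K_{Q_{<a}}=K_{Q\setminus\{a\}}\cap\Delta^{a}$ together with connectivity of the glued spaces; both hold once $P$ is closed under the relevant pairwise intersections, in particular for $P=P_{\max}(K)$ and for the posets $P$ arising in Theorem \ref{hocolim1}.
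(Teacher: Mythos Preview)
Your argument is correct and follows essentially the same route as the paper: an induction on the size of the indexing poset, peeling off a maximal element via Lemma~\ref{pushout} and identifying the resulting pushout in cohomology via Lemma~\ref{kernel}, using that $\iota_1,\iota_2,\iota_3$ send generators to generators or zero. You are more explicit than the paper in formulating the strengthened inductive hypothesis (the naturality clause) and in isolating the combinatorial identity $K_{Q_{<a}}=K_{Q\setminus\{a\}}\cap\Delta^{a}$ together with the closure-under-intersection condition on $P$ needed for it; the paper leaves both of these implicit.
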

\begin{proof}
    We prove this lemma by induction of $|P|$.   
    Let $\sigma$ be a maximal simplex in $K$.
    Let $K'$ be the simplicial complex consisted by the faces of simplices in $P \setminus \{\sigma\}$.
    Then by the assumption of the induction, $H^*(\hocolim_{P\setminus \{\sigma\}}F;\Z) \cong SR(K',\phi)$, $H^*(\hocolim_{P_{\leq \sigma}}F; \Z)\cong \Z[\sigma]$ and $H^*(\hocolim_{P_{<\sigma}}F; \Z)\cong SR(K'\cap \sigma,\phi)$, where $K'\cap \sigma$ is a simplicial complex consisted by the simplices that a simplex in $K'$ and a face of $\sigma$.
    By Lemma \ref{pushout} $X$ is represented by the following homotopy pushout diagrams
    \[
        \xymatrix{
            \hocolim_{P_{<\sigma}} F \ar[r] \ar[d] & \hocolim_{P\setminus \{\sigma\}} F \ar[d]\\
            \hocolim_{P_{\leq \sigma}}F \ar[r]& X.
        }
    \]
    Since $\iota_1,\iota_2, \iota_3$ are the maps that send each generator to its corresponding generator or $0$ in cohomology, the maps in the upper diagram satisfy the condition in Lemma \ref{kernel}.
    Therefore by Lemma \ref{kernel}, we obtain that $H^*(X)\cong SR(K,\phi)$. 
\end{proof}

\begin{proof}[Proof of Theorem \ref{hocolim1}]
    By this discussion, we apply Lemma \ref{construction_pre} to the case $P=P_{\max(K)}$, completing the proof.
\end{proof}

When the degree of generators in $SR(K,\phi)$ are only $2$ and $4$, Theorem \ref{hocolim1} becomes a well-known result.
This corollary is directly proved by the result of Davis and Januszkiewicz \cite{DJ}, and a special case of Theorem 2.34 in \cite{BBCG}.
\begin{corollary}
    Let  $SR(K,\phi)$ be the graded Stanley-Reisner ring for a simplicial complex $K$ with the vertex set $V$ and $\phi \colon V\rightarrow 2\Z_{>0}$.
    When the image of $\phi$ is in $\{2,4\}$, we can construct a space $X$ such that $H^*(X)\cong SR(K,\phi)$.
\end{corollary}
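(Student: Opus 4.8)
The plan is to deduce this directly from Theorem \ref{hocolim1} by producing a suitable decomposition of $V$. Since $\phi$ takes only the values $2$ and $4$, and since each of the multisets $\{4,6,\dots,2n+2\}$ and $\{4,8,\dots,4n\}$ contains an entry strictly larger than $4$ as soon as $n\geq 2$, the only degree multisets a subset $\sigma\cap A_i$ can legitimately realize are $\{2,2,\dots,2\}$ and $\{4\}$, the latter being the $n=1$ instance of $\{4,6,\dots,2n+2\}\cup\{2,\dots,2\}$ (equivalently of $\{4,8,\dots,4n\}\cup\{2,\dots,2\}$) with no $2$'s. So it is enough to choose the blocks $A_i$ so that every $\sigma\in P_{\max}(K)$ meets each $A_i$ in at most one generator of degree $4$.

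Concretely, I would take $A_0=\phi^{-1}(2)$, the set of all degree-$2$ generators, and then, writing $\phi^{-1}(4)=\{w_1,\dots,w_m\}$, set $A_j=\{w_j\}$ for $1\leq j\leq m$, so that $V=A_0\sqcup A_1\sqcup\dots\sqcup A_m$. For any $\sigma\in P_{\max}(K)$ the set $\sigma\cap A_0$ consists only of degree-$2$ vertices, so $\{\phi(x)\mid x\in\sigma\cap A_0\}=\{2,2,\dots,2\}$; and $\sigma\cap A_j$ is either empty or the singleton $\{w_j\}$, so its degree multiset is either the empty multiset (an instance of $\{2,\dots,2\}$) or $\{4\}$. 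In every case the combinatorial hypothesis of Theorem \ref{hocolim1} is satisfied, so that theorem produces a space $X$, built as a homotopy colimit of products of copies of $\CP^\infty$ and $BSU(2)$, with $H^*(X)\cong SR(K,\phi)$.

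There is essentially no obstacle here: all the work is in Theorem \ref{hocolim1}, and the corollary is merely the observation that its hypothesis is automatic once no generator has degree exceeding $4$, because one may isolate each degree-$4$ generator in its own block. (If one wanted fewer blocks, one could instead take the blocks $A_j$, $j\geq 1$, to be the colour classes of a proper colouring of the graph on $\phi^{-1}(4)$ in which $x$ and $y$ are joined when $xy\neq 0$ in $SR(K,\phi)$, but this refinement is not needed.) As already noted, the space so obtained recovers the Davis--Januszkiewicz construction of \cite{DJ} and lies within the scope of Theorem 2.34 of \cite{BBCG}.
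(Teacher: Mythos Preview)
Your proposal is correct and follows the route the paper has in mind: the corollary is stated immediately after Theorem \ref{hocolim1} with no separate proof, the intent being that it is an immediate specialization, and your singleton decomposition $A_0=\phi^{-1}(2)$, $A_j=\{w_j\}$ is a clean way to verify its hypothesis. Your identification of the resulting pieces as products of $\CP^\infty$ and $BSU(2)\cong BSp(1)\cong \HP^\infty$ is also right, and is exactly why this recovers the polyhedral product construction of \cite{DJ} and \cite{BBCG}.
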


When in $SR(K,\phi)$ there is no pair of generator $x,y\in V$ such that $|x|=|y|=4$ and $xy\ne 0$, we don't have to take the decomposition of the vertex set.
In this case, we can restate Theorem \ref{hocolim1} as follows.
\begin{corollary}\label{hocolim2}
    Let $SR(K,\phi)$ be the graded Stanley-Reisner ring for a simplicial complex $K$ with the vertex set $V$ and $\phi \colon V\rightarrow 2\Z_{>0}$.
    We assume that there is no pair of generator $x,y\in V$ such that $|x|=|y|=4$ and $xy\ne 0$ in $SR(K,\phi)$.
    Then if $SR(K,\phi)$ satisfies the following condition, we can construct a space $X$ such that $H^*(X)\cong SR(K,\phi)$. 
    \begin{itemize}
        \item For $\sigma\in P_{\max}(K)$, the set $\{\phi(x)\mid x\in \sigma , \phi(x)\}$ is equal to $\{2,2, \dots , 2\}$, $\{4,6,\dots ,2n+2\}\cup \{2,2,\dots , 2\}$ or $\{4,8,\dots ,4n\}\cup \{2,2,\dots , 2\}$ as a multiset.
    \end{itemize}
\end{corollary}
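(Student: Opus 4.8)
The plan is to derive this statement as an immediate specialization of Theorem \ref{hocolim1}. Given $SR(K,\phi)$ satisfying the stated multiset condition, I would take the trivial decomposition of the vertex set, namely the single block $A_1 = V$. Then for every $\sigma \in P_{\max}(K)$ one has $\sigma \cap A_1 = \sigma$, so the hypothesis of Theorem \ref{hocolim1} --- that $\{\phi(x)\mid x\in \sigma \cap A_i\}$ be one of the three admissible multisets for all $i$ and all $\sigma \in P_{\max}(K)$ --- becomes verbatim the hypothesis of the corollary. Applying Theorem \ref{hocolim1} with this one-block decomposition (equivalently, Lemma \ref{construction_pre} with $P=P_{\max}(K)$) then produces the desired space $X$ with $H^*(X)\cong SR(K,\phi)$.

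The point worth recording is why the absence of degree-$4$ pairs is the right hypothesis here. Each of the three admissible multisets contains at most one copy of $4$, so the multiset condition already forbids two generators of degree $4$ from lying in a common $\sigma\in P_{\max}(K)$; and if $x,y\in V$ had $\phi(x)=\phi(y)=4$ with $xy\neq 0$ in $SR(K,\phi)$, then $\{x,y\}$ would be a face of $K$, hence of some maximal simplex, which belongs to $P_{\max}(K)$ --- a contradiction. So the condition on degree-$4$ pairs is in fact automatic once the multiset condition holds, and it is precisely this absence that makes a single $BSU(n+1)$ (or $BSp(n)$) factor --- whose cohomology $\Z[c_2,\dots,c_{n+1}]$, resp.\ $\Z[\iota_3^*(c_2),\dots,\iota_3^*(c_{2n})]$, accommodates only one generator in degree $4$ --- capable of carrying all of the degree-$4$ part of $X_\sigma$, so that no splitting of $V$ into several blocks is required.

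I do not expect a genuine obstacle: the entire content has already been placed in Theorem \ref{hocolim1} and in the analysis of the maps $\iota_1,\iota_2,\iota_3$ in Section \ref{s_hocolim}. The only things to verify are bookkeeping: that the multisets $\{4,6,\dots,2n+2\}\cup\{2,\dots,2\}$ and $\{4,8,\dots,4n\}\cup\{2,\dots,2\}$ in the corollary's statement coincide, after relabeling $n$, with those in Theorem \ref{hocolim1}, and that the space $X_\sigma$ attached to such a $\sigma$ indeed has cohomology the polynomial ring $\Z[\sigma]$ on the vertices of $\sigma$ with their prescribed degrees. Both are routine and follow from the cohomology computations recalled in Section \ref{s_hocolim}.
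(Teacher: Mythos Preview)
Your proposal is correct and matches the paper's own approach exactly: the paper introduces Corollary \ref{hocolim2} by noting that when no two degree-$4$ generators multiply nontrivially one does not need to decompose $V$, and then simply restates Theorem \ref{hocolim1} with the trivial one-block decomposition $A_1=V$. Your additional observation that the degree-$4$ hypothesis is in fact automatic from the multiset condition is a nice clarification the paper leaves implicit.
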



\section{Examples}\label{s_example}

In this section we look at some examples about Corollary \ref{hocolim2}. 

Let $SR[K,\phi]\cong \Z[x_4,x_{6},x_{8}]/(x_{6}x_{8})$.
Then the corresponding diagram is the following
\begin{center}
    \[
        BSU(3) \leftarrow BSp(1) \rightarrow  BSp(2).
    \]
\end{center}

Let $SR[K,\phi]\cong \Z[x_4,x_{6,1},x_{6,2},\dots ,x_{6,n},x_{8}]/(x_{6,j}x_{6,k} \text{ for }j\ne k)$, where $|x_{i,j}|=i$.
Then the corresponding diagram is the following
\begin{center}
        \[
            \xymatrix{
                & BSp(2) \ar[dl] \ar[d] \ar[dr] \ar[drr]& & \\
                BSU(4)& BSU(4)& \dots & BSU(4).
            }
        \]
\end{center}

Let $SR[K,\phi]\cong \Z[x_4,x_{6,1},x_{6,2}, x_{8,1},x_{8,2}]/(x_{6,1}x_{6,2},x_{8,1}x_{8,2})$, where $|x_{i,j}|=i$.
Then the corresponding diagram is the following
\[
    \xymatrix{
        &  & & BSU(4) \\
        & BSU(3) \ar[rr] \ar[urr]& & BSU(4)\\
        & & BSp(2)\ar[uur]  \ar[dddr]  & \\
        BSp(1) \ar[urr]\ar[drr]\ar[uur] \ar[ddr] &  &   &\\
        & & BSp(2)\ar[uuur]  \ar[ddr]  & \\
        & BSU(3) \ar[rr] \ar[drr]& & BSU(4)\\
        & & & BSU(4).
    }
\]

\section{Approach from algebra over the Steenrod algebra}\label{s_Steenrod}

This section discusses when a graded polynomial ring has an unstable algebra structure over mod-$p$ Steenrod algebra by using previous results.
All of the properties in this section are similar to the properties used by Aguad\'e in \cite{A}.
In the paper, Aguad\'e obtains that which polynomial algebra over $\Z$ is realizable as the integral cohomology ring of a space when the orders of the generators are all different.
To prove this, Aguad\'e observes which polynomial ring has an unstable algebra structure over the mod-$p$ Steenrod algebra by using the result of Adams and Wilkerson\cite{AW}.
In this section, we consider that which polynomial ring has an unstable algebra structure over the mod-$p$ Steenrod algebra under the condition that there is at most $1$ generator with degree $4$.

When a commutative graded algebra $A^*$ over $\Z/p$ has an action of mod-$p$ Steenrod algebra with Cartan formula, we say $A^*$ an algebra over the mod-$p$ Steenrod algebra.
An algebra over the mod-$p$ Steenrod algebra $A^*$ with $A^{2i+1}=0$ for all $i$ is unstable if and only if for all homogeneous elements $x\in A^{2d}$, there are equations
\[
      \Pe^{k}(x)=\begin{cases}
          x^p \quad &k=d\\
          0 \quad &k>d
      \end{cases} \quad (p\geq 3), \quad \text{or} \quad 
      \Sq^{2k}(x)=\begin{cases}
        x^2 \quad &k=d\\
        0 \quad &k>d.
    \end{cases}  \quad (p=2).
\]
When the odd degree parts of $A^*$ is not equal to $0$, the definition of the unstable condition is more complicated.

The following theorem can be obtained by combining the Theorem 1.1 and Theorem 1.2 in Adams and Wilkerson\cite{AW}.
\begin{theorem}[cf. Theorem 1.1 and Theorem 1.2 in Adams and Wilkerson\cite{AW}]\label{invariant}
        Let $A^*$ be a graded polynomial algebra over $\Z/p$ for prime $p$.
        We assume that the following conditions.
        \begin{itemize}
            \item $A^*$ is an unstable algebra over the mod-$p$ Steenrod algebra.
            \item $A^*$ is evenly generated.
            \item $A^*$ is finitely generated as ring.
            \item The degree of generators in $A^*$ are prime to $p$.
        \end{itemize} 
        Then there is an isomorphism 
        \[A^*\cong H^*(BT^n;\Z/p)^W\]
        for some $n$ and a group $W$ generated by pseudoreflections.
\end{theorem}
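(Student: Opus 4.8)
The plan is to deduce Theorem~\ref{invariant} from Theorems~1.1 and~1.2 of Adams and Wilkerson \cite{AW}, the work being to check that $A^*$ falls within their scope. Recall that \cite[Theorem~1.1]{AW} embeds a suitable unstable integral domain over the Steenrod algebra into $H^*(BT^n;\Z/p)$, while \cite[Theorem~1.2]{AW} identifies such a subalgebra with a ring of invariants once it is integrally closed in $H^*(BT^n;\Z/p)$. So the argument breaks into three steps: first embed $A^*$ into $H^*(BT^n;\Z/p)$; then verify integral closedness and pass to the invariant ring; then recognize the resulting group as a pseudoreflection group.

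For the first step, note that a polynomial algebra is automatically a connected graded Noetherian integral domain, and that by hypothesis $A^*$ is in addition finitely generated, evenly generated, and unstable over the mod-$p$ Steenrod algebra; the even generation is what puts us literally in the setting of \cite{AW}, where the unstable condition takes the form recalled above. The genuinely restrictive hypothesis is that the degrees of the generators are prime to $p$: this is precisely the condition under which the embedding furnished by \cite[Theorem~1.1]{AW} lands in $H^*(BT^n;\Z/p)$ itself rather than in a proper purely inseparable extension, since a $p$-th root of a generator of degree $d$ would have to sit in degree $d/p$, which is not an integer when $p\nmid d$. The upshot is a monomorphism $A^*\hookrightarrow H^*(BT^n;\Z/p)$ of unstable algebras over the Steenrod algebra, where $n$ is the number of polynomial generators of $A^*$, forced by equality of Krull dimensions.

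For the second step we must check the hypothesis of \cite[Theorem~1.2]{AW} that $A^*$ is integrally closed in $H^*(BT^n;\Z/p)$. Here we use that a polynomial algebra is a unique factorization domain and hence integrally closed in its own field of fractions; combined with the fact that, inside the Steenrod-algebra framework of \cite{AW}, a sub-unstable-algebra such as $A^*$ is module-finite in $H^*(BT^n;\Z/p)$, this upgrades to integral closedness in $H^*(BT^n;\Z/p)$. Then \cite[Theorem~1.2]{AW} produces an isomorphism $A^*\cong H^*(BT^n;\Z/p)^W$ of unstable algebras over the Steenrod algebra for a finite group $W\subset \operatorname{GL}_n(\Z/p)$.

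Finally, since $H^*(BT^n;\Z/p)^W$ is by hypothesis a polynomial algebra, Serre's theorem --- the implication, valid over any field, that a finite group whose ring of invariants on a polynomial ring is again polynomial must be generated by pseudoreflections --- shows that $W$ is generated by pseudoreflections, which is the last clause of the statement. The main obstacle in turning this sketch into a proof is not conceptual but a matter of careful bookkeeping: one must match the clean hypotheses above with the exact technical formulations of \cite[Theorems~1.1 and~1.2]{AW} --- pinning down in particular why ``degrees prime to $p$'' is the condition that makes the fraction field of $A^*$, with its Steenrod action, inseparably closed in the sense of \cite{AW} --- and verify the two small ring-theoretic facts used in the second step, namely that $A^*\subseteq H^*(BT^n;\Z/p)$ is module-finite and that normality of $A^*$ in its field of fractions therefore implies normality in $H^*(BT^n;\Z/p)$.
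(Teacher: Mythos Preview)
The paper does not supply a proof of this theorem: it is stated as a citation, with the surrounding text saying only that it ``can be obtained by combining the Theorem~1.1 and Theorem~1.2 in Adams and Wilkerson~\cite{AW}.'' Your proposal is a correct elaboration of exactly that combination---embedding via \cite[Theorem~1.1]{AW}, using normality of a polynomial ring to meet the integral-closedness hypothesis of \cite[Theorem~1.2]{AW}, and then invoking Serre's converse to Chevalley--Shephard--Todd---so it is fully in line with what the paper indicates.
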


By using this theorem, we can prove the next theorem.

\begin{proposition}[cf. Proposition 2 in Aguad\'e\cite{A}]\label{Aguade}
    Let $A^*$ be a graded polynomial algebra over $\Z$ satisfying the following condition.
    \begin{itemize}
        \item There is a number $N$ such that for all prime number $p>N$ $A\otimes \Z/p$ has unstable algebra structure over the mod-$p$ Steenrod algebra.
    \end{itemize}
    Then the degree of the generator in $A^*$ is the union of the following table.
    \begin{itemize}
        \item $\{2\}$
        \item $\{4,6,\dots ,2n\}$
        \item $\{4,8,\dots ,4n\}$
        \item $\{4,8,\dots ,4(n-1),2n\}$ for $n\geq 4$
        \item $\{4,12\}$
        \item $\{4,12,16,24\}$
        \item $\{4,10,12,16,18,24\}$
        \item $\{4,12,16,20,24,28,36\}$
        \item $\{4,16, 24,28,36,40,48,60\}$
        \item $\{4,16\}$
        \item $\{4,24\}$
        \item $\{4,48\}$
    \end{itemize}
\end{proposition}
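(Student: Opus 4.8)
\emph{Proof proposal.} The plan is to imitate Aguad\'e's argument in \cite{A}: reduce modulo a large prime, recognise $A^*\otimes\Z/p$ as a ring of invariants via Theorem \ref{invariant}, read off the possible generator degrees from the classification of complex reflection groups, and use the standing hypothesis (at most one generator of degree $4$) to land exactly on the stated table.

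First I would set up the reduction. A graded polynomial $\Z$-algebra is torsion free, so $A^*\otimes\Z/p$ is a polynomial $\Z/p$-algebra; as a graded-commutative polynomial ring it is evenly generated, and it is finitely generated because $A^*$ is. Write the generator degrees of $A^*$ as $2d_1,\dots,2d_n$ (this multiset is intrinsic) and choose $p>N$ larger than every $2d_i$; then the generator degrees are prime to $p$, so Theorem \ref{invariant} gives
\[
    A^*\otimes\Z/p\ \cong\ H^*(BT^n;\Z/p)^{W_p}
\]
for a finite pseudoreflection group $W_p\subset\mathrm{GL}_n(\Z/p)$. Since $|W_p|=d_1\cdots d_n$ is independent of $p$, for $p$ large we also get $p\nmid|W_p|$.

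Next I would convert this into a statement about reflection groups. Because $p\nmid|W_p|$, the non-modular Shephard--Todd--Chevalley theorem applies: $H^*(BT^n;\Z/p)^{W_p}$ is polynomial on generators of degrees $2e_1,\dots,2e_n$, where the $e_i$ are the reflection degrees of $W_p$, so comparing both descriptions forces $\{e_i\}=\{d_i\}$ as multisets for \emph{every} large $p$. The problem therefore becomes: which multisets of positive integers occur as the reflection degrees of a pseudoreflection group over $\Z/p$ for all sufficiently large $p$? For a single prime this is the Clark--Ewing analysis used by Aguad\'e: a non-modular reflection group has the reflection degrees of a complex reflection group, and a given complex reflection group embeds in $\mathrm{GL}_n(\Z/p)$ exactly when its defining representation (or a Frobenius-twisted form of it) is defined over $\Z/p$. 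Imposing this for all large $p$ is very rigid: an irreducible complex reflection group survives only if it is crystallographic — a Weyl group of type $A$, $B/C$, $D$, $E_6$, $E_7$, $E_8$, $F_4$ or $G_2$ — or one of the dihedral groups $I_2(8)$, $I_2(12)$, $I_2(24)$, which sit in $\mathrm{GL}_2(\Z/p)$ as the normaliser of a split or a non-split maximal torus because $8,12,24$ each divide $p^2-1$ for every odd $p$. Everything else ($I_2(5)$, $H_3$, $H_4$, the exceptional rank-two groups $G_4,\dots,G_{22}$, cyclic groups $\Z/m$ with $m\ge3$ acting on a line, and so on) needs roots of unity that are absent from $\Z/p$ for a positive density of primes, and is excluded. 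I expect this dichotomy — scanning the Shephard--Todd list and checking realisability prime by prime — to be the main obstacle; it is exactly the part inherited from \cite{AW} and \cite{A}.

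Finally I would enumerate. A finite complex reflection group is an orthogonal product of irreducible factors together with a trivial action on a complement, so on cohomology the generator-degree multiset of $A^*$ is the union of the degree multisets of the surviving irreducible factors, plus one copy of $\{2\}$ for each trivial line. Each surviving nontrivial irreducible factor — every irreducible Weyl group, and every $I_2(m)$ with $m\ge3$ — has precisely one reflection degree equal to $2$, hence contributes precisely one generator of degree $4$. Thus the hypothesis that $A^*$ has at most one generator of degree $4$ allows at most one nontrivial irreducible factor, and the generator-degree multiset must be a union of copies of $\{2\}$ with at most one of $\{4,6,\dots,2n\}$ ($A$), $\{4,8,\dots,4n\}$ ($B/C$), $\{4,8,\dots,4(n-1),2n\}$ ($D_n$, $n\ge4$), $\{4,12\}$ ($G_2$), $\{4,12,16,24\}$ ($F_4$), $\{4,10,12,16,18,24\}$ ($E_6$), $\{4,12,16,20,24,28,36\}$ ($E_7$), $\{4,16,24,28,36,40,48,60\}$ ($E_8$), $\{4,16\}$ ($I_2(8)$), $\{4,24\}$ ($I_2(12)$) or $\{4,48\}$ ($I_2(24)$). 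This is precisely the asserted table, and, apart from the uniform-in-$p$ realisability classification above, the whole argument is bookkeeping with the exponents of Weyl groups.
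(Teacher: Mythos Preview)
You have imported a hypothesis that is not part of this proposition. Proposition~\ref{Aguade} does \emph{not} assume that $A^*$ has at most one generator in degree~$4$; that restriction enters only later, in Proposition~\ref{polynomial_with_Steenrod}. Correspondingly, the conclusion here is that the multiset of generator degrees is an \emph{arbitrary} union of entries from the table, not a single non-$\{2\}$ entry together with copies of $\{2\}$. If you delete the phrase ``use the standing hypothesis (at most one generator of degree~$4$)'' and the ensuing ``at most one nontrivial irreducible factor'' restriction in your last paragraph, the remainder of your argument already delivers the correct statement.

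Beyond this misreading, your route differs from the paper's. Rather than analysing which reflection groups are realisable over $\Z/p$ for \emph{all} large $p$, the paper uses Dirichlet's theorem to select a \emph{single} prime $p>N$ satisfying $p\equiv 7\pmod{16}$, $p\equiv 2\pmod{3}$, $p\equiv 3\pmod{5}$, $p\equiv 3\pmod{7}$, and $p\equiv 2\pmod{q}$ for every prime $q>7$ dividing a generator degree. These congruences are chosen so that, at this one prime, the Clark--Ewing classification of $p$-adic pseudoreflection groups leaves exactly products of the groups appearing in the table; a single application of Theorem~\ref{invariant} then finishes. This sidesteps the subtlety in your approach that the groups $W_p$ may in principle vary with $p$ even though their degree multisets must all equal $\{d_i\}$, and it replaces your uniform-in-$p$ scan of the Shephard--Todd list by one lookup at a well-chosen prime.
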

We prove this proposition by the same method in the proof of Proposition 2 in \cite{A}.
\begin{proof}
    Let $p_1,\dots ,p_i$ be the primes larger than $7$ which divide the degree of generators in $A^*$.
    Then by a theorem of Dirichlet we can take a prime number $p>N$ such that 
    \begin{align*}
        &p\equiv 7 \mod 16 \\
        &p\equiv 2 \mod 3 \\
        &p\equiv 3 \mod 5 \\
        &p\equiv 3 \mod 7 \\
        &p\equiv 2 \mod p_i.
    \end{align*}
    By Theorem \ref{invariant}, $A^*\otimes \Z/p$ is isomorphic to an invariant ring $H^*(BT^n;\Z/p)^W$ for some $n$ and a group $W$ generated by pseudoreflections.
    By the classification theorem of $p$-adic pseudoreflection group (cf. Clark and Ewing \cite{CE}), we obtain this proposition.
\end{proof}

For a graded algebra $A^*$, we denote $QA^*=A^*/(A^*_+)^2$.
The following lemma is proved by Thomas.
\begin{theorem}[Theorem 1.4 in Thomas\cite{T}]\label{relation}
    Let $A^*$ be a finitely generated polynomial algebra over $\Z/2$ and an unstable algebra over the mod $2$-Steenrod algebra.
    Then for any number $i$ and odd number $n\geq 3$, the map $$\Sq^{2^i} \colon QA^{2^i(n-1)}\rightarrow QA^{2^in}$$ is a surjection.
\end{theorem}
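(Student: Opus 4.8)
The plan is to derive the surjectivity from three facts about the unstable $\mathcal{A}_2$-algebra structure on a polynomial algebra $A^*$: the top-square identity $\Sq^{|x|}(x)=x^2$ together with $\Sq^{j}(x)=0$ for $j>|x|$; the Cartan formula, which in particular gives $\Sq^{2j}(y^2)=(\Sq^{j}y)^2$ and $\Sq^{2j+1}(y^2)=0$; and the Adem relations, which for $n$ odd rewrite $\Sq^{2^in}$ in the form
\[\Sq^{2^in}=\Sq^{2^i}\Sq^{2^i(n-1)}+\sum_{1\le c\le 2^{i-1}}\varepsilon_c\,\Sq^{2^in-c}\Sq^{c},\]
with $\varepsilon_c\in\{0,1\}$; the coefficient of the leading term is $1$ since $\binom{2^i(n-1)-1}{2^i}\equiv\binom{2^{i+1}m-1}{2^i}\equiv 1\pmod 2$ by Lucas's theorem (here $n-1=2m$). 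When $i=0$ the correction sum is empty and this is just $\Sq^{n}=\Sq^1\Sq^{n-1}$.

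Given a polynomial generator $x$ of degree $d=2^in$ with $n\ge 3$ odd, I would apply this expansion of $\Sq^{d}=\Sq^{|x|}$ to $x$. The left side is $x^2$, which is decomposable, so it vanishes in $QA^{2d}$; passing to indecomposables we get that $\Sq^{2^i}$ applied to $\overline{\Sq^{2^i(n-1)}x}$ equals a sum of correction terms $\overline{\Sq^{2^in-c}(\Sq^{c}x)}$ with $1\le c\le 2^{i-1}$. The heart of the matter is to analyse these correction terms and then to descend from this identity in $QA^{2d}$ to the desired identity $\overline{x}\in\Sq^{2^i}(QA^{d-2^i})$ in $QA^{d}$. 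The mechanism is already transparent for $A^*=\Z/2[x_4,x_6]$: there $\Sq^6x_6=x_6^2$ together with $A^7=0$ forces $\Sq^4x_6=x_4x_6$, and then $x_6^2=\Sq^2(x_4x_6)$ expanded by the Cartan formula, after comparing monomials in the free algebra, forces $\Sq^2x_4=x_6$, so $\Sq^2\colon QA^4\to QA^6$ is onto. I would organize the general case as an induction (on the number of generators, or on total degree), using the top-square relations in the intermediate degrees to pin down the relevant Steenrod operations and using freeness of the polynomial algebra — linear independence of monomials — to convert the resulting equations into the surjectivity statement; the case $i=0$ serves as the base step, and a Frobenius/doubling argument relating $\Sq^{2j}$ on the subalgebra of squares to $\Sq^{j}$ may help raise $i$.

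The main obstacle is precisely this descent and bookkeeping. For $A^*$ with many generators the intermediate pieces $\Sq^{c}x$ need not vanish, the Cartan-formula expansions of the correction terms proliferate, and one must show — presumably by a carefully chosen induction together with repeated use of instability in lower degrees — that every contribution to $QA^{d}$ that is not already of the form $\Sq^{2^i}(\text{indecomposable of degree }d-2^i)$ gets killed. I expect the cleanest packaging is by contradiction: assume $\overline{x}\in QA^{d}$ lies outside the image of $\Sq^{2^i}$, then propagate this through the instability relations to a forced identity in the free polynomial algebra that contradicts linear independence of monomials, exactly as in the $\Z/2[x_4,x_6]$ computation above.
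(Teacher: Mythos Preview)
The paper does not give a proof of this statement: it is quoted verbatim as Theorem~1.4 of Thomas~\cite{T} and used as a black box. So there is no ``paper's proof'' to compare your proposal against; any comparison would have to be against Thomas's original argument.

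On the substance of your sketch: the ingredients you line up (instability, Cartan, and the Adem decomposition $\Sq^{2^in}=\Sq^{2^i}\Sq^{2^i(n-1)}+\cdots$ for $n$ odd) are indeed the right ones, and your worked example in $\Z/2[x_4,x_6]$ is correct. The genuine gap is exactly the one you flag as ``the main obstacle'': from $x^2=\Sq^{2^i}(\Sq^{2^i(n-1)}x)+\text{corrections}$ you obtain an identity in degree $2d=2^{i+1}n$, whereas the theorem asks about $QA^{d}$ with $d=2^in$. Your proposal to ``descend'' by induction and a contradiction on monomial independence is not yet an argument: in a polynomial algebra with many generators in the relevant degrees, the Cartan expansions of the correction terms $\Sq^{2^in-c}(\Sq^c x)$ interact with all generators at once, and nothing you have written explains why the resulting linear system in $QA^{2d}$ forces the single desired relation $\overline{x}\in\operatorname{im}\Sq^{2^i}$ in $QA^{d}$. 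The $\Z/2[x_4,x_6]$ computation succeeds because the intermediate degrees are one-dimensional; in general they are not, and the step ``comparing monomials in the free algebra'' does not by itself isolate the coefficient you need. If you want to carry this through, you will need a sharper organizing device than a bare induction on the number of generators --- for instance, a careful analysis of how the $\Sq^{2^j}$ act on $QA^*$ as a module, or the kind of secondary-operation and excess arguments Thomas actually uses. As written, the proposal is a plausible plan with the hard step still missing.
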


\begin{lemma}\label{remove1}
    Let $A^*$ be a polynomial algebra over $\Z$ such that the degrees of generators are equal to one of the following list  as a multiset.
    \begin{itemize}
        \item $\{4,8,\dots ,4(n-1),2n\}\cup\{2,2,\dots 2\}$ with $n\geq 4$ and $n$ is not a power of $2$.
        \item $\{4,12\}\cup\{2,2,\dots 2\}$
        \item $\{4,12,16,24\}\cup\{2,2, \dots 2\}$
        \item $\{4,10,12,16,18,24\}\cup\{2,2,\dots 2\}$
        \item $\{4,12,16,20,24,28,36\}\cup\{2,2,\dots 2\}$
        \item $\{4,16, 24,28,36,40,48,60\}\cup\{2,2,\dots 2\}$
        \item $\{4,24\}\cup\{2,2, \dots 2\}$
        \item $\{4,48\}\cup\{2,2, \dots 2\}$
    \end{itemize}
    Then $A^*\otimes \Z/2$ doesn't have an unstable algebra structure over the mod-$2$ Steenrod algebra.
\end{lemma}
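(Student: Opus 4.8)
Assume $A^*\otimes\Z/2$ carries an unstable structure over the mod-$2$ Steenrod algebra; we aim for a contradiction, using only Theorem~\ref{relation}, the Adem relations and the instability axioms. Since $A^*$ is a polynomial ring over $\Z$, $A^*\otimes\Z/2$ is the polynomial $\Z/2$-algebra on the prescribed multiset of degrees, so $\dim_{\Z/2}Q(A^*\otimes\Z/2)^{d}$ equals the multiplicity of $d$ in the list for every $d\ge 4$, while the degree-$2$ generators contribute only in degree $2$, where Theorem~\ref{relation} imposes nothing. For each of the seven sporadic lists it then suffices to name a generator degree $d$ whose odd part is at least $3$ — write $d=2^{i}m$ with $m$ odd, $m\ge 3$ — for which $d-2^{i}$ is not a generator degree: then $Q(A^*\otimes\Z/2)^{d}\ne 0$ but $Q(A^*\otimes\Z/2)^{d-2^{i}}=0$, contradicting surjectivity of $\Sq^{2^{i}}\colon Q(A^*\otimes\Z/2)^{d-2^{i}}\to Q(A^*\otimes\Z/2)^{d}$. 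Concretely: $d=12$ (so $d-2^{i}=8$) for $\{4,12\}$, $\{4,12,16,24\}$, $\{4,10,12,16,18,24\}$ and $\{4,12,16,20,24,28,36\}$; $d=36$ (so $d-2^{i}=32$) for $\{4,16,24,28,36,40,48,60\}$; $d=24$ (so $d-2^{i}=16$) for $\{4,24\}$; and $d=48$ (so $d-2^{i}=32$) for $\{4,48\}$. In every case $d-2^{i}\ge 8$, so the extra degree-$2$ generators are irrelevant.

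For $\{4,8,\dots,4(n-1),2n\}\cup\{2,\dots,2\}$ with $n$ even and not a power of $2$, write $n=2^{a}b$ with $a\ge 1$ and $b\ge 3$ odd. Since $2n=4\cdot\tfrac n2$ and $1\le\tfrac n2\le n-1$, the degree $2n$ already occurs among $4,8,\dots,4(n-1)$, so $Q(A^*\otimes\Z/2)^{2n}$ has dimension at least $2$. But $2n=2^{a+1}b$ with $b$ odd, $b\ge 3$, so Theorem~\ref{relation} forces $\Sq^{2^{a+1}}\colon Q(A^*\otimes\Z/2)^{2n-2^{a+1}}\to Q(A^*\otimes\Z/2)^{2n}$ to be onto; yet $2n-2^{a+1}=2^{a+1}(b-1)$ is a positive multiple of $4$ that is strictly smaller than $4(n-1)$, different from $2n$, and larger than $2$, hence a generator degree of multiplicity exactly one. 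A one-dimensional space cannot surject onto a space of dimension at least $2$.

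The remaining case, $\{4,8,\dots,4(n-1),2n\}\cup\{2,\dots,2\}$ with $n$ odd (hence $n\ge 5$), is the delicate one, and I would handle it by a doubling reduction. As $A^*\otimes\Z/2$ is concentrated in even degrees, $\Sq^{1}=0$, hence $\Sq^{j}=0$ for all odd $j$, and $A^*\otimes\Z/2=\Phi B^*$, where $\Phi$ is the Frobenius (degree-doubling) functor on unstable algebras and $B^*$ is the unstable $\Z/2$-algebra with $B^{k}=(A^*\otimes\Z/2)^{2k}$ and $\Sq^{i}_{B}=\Sq^{2i}_{A}$; that $B^*$ is unstable follows because the Adem relations among the even squares, and the instability axioms, transfer verbatim. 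Then $B^*$ is again a finitely generated polynomial $\Z/2$-algebra, on generators of the halved degrees: some in degree $1$, generators $c_{j}$ in degree $2j$ for $1\le j\le n-1$, and one generator $v$ in degree $n$. The ideal generated by the degree-$1$ generators is stable under all Steenrod operations (a degree-$1$ class admits only $\Sq^{0}$ and $\Sq^{1}$), so after quotienting we may assume $B^*=\Z/2[c_{1},\dots,c_{n-1},v]$. Applying Theorem~\ref{relation} to $B^*$ with the odd generator $v$ (here $i=0$, $m=n$), the surjection $\Sq^{1}\colon QB^{n-1}\to QB^{n}$ forces $\Sq^{1}c_{(n-1)/2}=v$ — both $Q$-groups are one-dimensional and $B^*$ has no decomposables in the odd degree $n$ — whence $\Sq^{1}v=\Sq^{1}\Sq^{1}c_{(n-1)/2}=0$.

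To conclude one exploits the odd-degree generator $v$. First, $\Sq^{2k+1}v=0$ whenever $2k+1<n$: the odd part of $B^*$ in degrees below $3n$ equals $v\cdot\Z/2[c_{1},\dots,c_{n-1}]$, so $\Sq^{2k}v=v\,p$ with $\deg p=2k\le n-3$, and $\Sq^{2k+1}v=\Sq^{1}\Sq^{2k}v=\Sq^{1}v\cdot p+v\,\Sq^{1}p=v\,\Sq^{1}p=0$, since $\Sq^{1}$ annihilates every $c_{j}$ of degree $<n-1$. Since $n$ is odd it is not a power of $2$, so $\Sq^{n}$ is decomposable in the Steenrod algebra; expanding $v^{2}=\Sq^{n}v$ through such a decomposition, and using the vanishing just proved together with $\Sq^{>n}v=0$, one should reach $v^{2}=0$, a contradiction. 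When $n\equiv 1\pmod 4$ this is immediate: $\Sq^{n-2}v=0$ and the Adem relation $\Sq^{2}\Sq^{n-2}=\binom{n-3}{2}\Sq^{n}+\Sq^{n-1}\Sq^{1}$ give $0=\Sq^{2}\Sq^{n-2}v=\binom{n-3}{2}v^{2}=v^{2}$, because $\binom{n-3}{2}$ is odd. The main obstacle is to make this last step work uniformly in odd $n$: the even operations $\Sq^{2k}v$ ($k\ge 1$) need not vanish, so one must choose the decomposition of $\Sq^{n}$ — equivalently, control the elements $\Sq^{2k}v$ — carefully enough that every resulting term is killed by the relations in the previous paragraph. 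This is the technical heart of the argument.
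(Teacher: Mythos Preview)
Your treatment of the seven sporadic lists and of the first family with $n$ even (not a power of $2$) is correct and matches the paper: both invoke Thomas's theorem (Theorem~\ref{relation}) to exhibit a generator degree $d=2^{i}m$, $m\ge 3$ odd, whose source $QA^{d-2^{i}}$ is too small for the required surjection. (The paper takes $d=60$ rather than your $d=36$ for the $E_8$-type list, but either choice works.)

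The genuine gap is the odd-$n$ subcase of the first family, and you flag it yourself. The halving reduction is legitimate --- an evenly graded unstable $\mathcal{A}_2$-algebra is $\Phi$ of an unstable algebra --- and the deductions $\Sq^{1}c_{(n-1)/2}=v$, $\Sq^{1}v=0$, and $\Sq^{2k+1}v=0$ for $2k+1<n$ are all sound. But the final step, forcing $v^{2}=0$ from decomposability of $\Sq^{n}$, is only carried out for $n\equiv 1\pmod 4$; for $n\equiv 3\pmod 4$ the coefficient $\binom{n-3}{2}$ is even and you offer no replacement. The difficulty is real: any Adem expansion of $\Sq^{n}$ produces terms $\Sq^{n-\ell}\Sq^{\ell}v$ with $\ell$ even, and $\Sq^{\ell}v=v\cdot p$ where $p$ can involve $c_{j}$ with $j\ge (n-1)/2$, whose $\Sq^{1}$-images are not controlled by your hypotheses, so the inductive vanishing you rely on breaks down. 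The paper bypasses all of this by simply citing Proposition~3 of Aguad\'e~\cite{A} for odd $n$; to make your route self-contained you would need either to reproduce that argument or to supply the missing uniform contradiction. As written, the proposal does not prove the lemma.
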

\begin{proof}    
    We assume that $A^*\otimes \Z/2$ has an unstable algebra over the mod-$2$ Steenrod algebra.
    By Theorem \ref{relation}, if there is a generator $x$ with $|x|=12$, then there must be a generator $y$ with $|y|=8$.
    Therefore the second, third, fourth and fifth cases don't have an unstable algebra structure over the mod-$2$ Steenrod algebra.

    Similarly, if there is a generator $x$ with $|x|=60,24,48$, then there must be a generator $y$ with $|y|=56,16,32$, respectively.
    Therefore $sixth,seventh$ and $eighth$ cases don't have an unstable algebra structure over the mod-$2$ Steenrod algebra.

    It remains to show the first case. 
    In this case we can denote $n=2^im$ for an integer $i$ and an odd number $m\geq  3$.
    When $i=0$, by Proposition 3 in \cite{A}, $A^*$ doesn't have an unstable algebra structure over the mod-$2$ Steenrod algebra.
    When $i\geq 1$, by Theorem \ref{relation} $\Sq^{2^{i+1}}\colon QA^{2^{i+1}(m-1)} \rightarrow QA^{2^{i+1}m}$ must be a surjection.
    But $\mathrm{dim}(QA^{2^{i+1}(m-1)})=1$ and $\mathrm{dim}(QA^{2^{i+1}m})=2$, it is contradiction.
    Therefore the first case doesn't have an unstable algebra structure over the mod-$2$ Steenrod algebra.

    Combining these discussion, complete the proof.
\end{proof}

\begin{lemma}\label{remove2}
    Let $A^*$ be a polynomial algebra over $\Z$ such that the degrees of generators are equal to $\{4,16\}\cup\{2,2,\dots 2\}$ as a multiset.
    Then $A^*\otimes \Z/3$ doesn't have an unstable algebra over the mod-$3$ Steenrod algebra.
\end{lemma}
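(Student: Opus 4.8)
The plan is to argue by contradiction in the spirit of Proposition~\ref{Aguade}, using Theorem~\ref{invariant} and the classification of mod-$3$ pseudoreflection groups, now specialised to the degree multiset $\{4,16\}\cup\{2,\dots,2\}$. So suppose $A^*\otimes\Z/3$ carries an unstable algebra structure over the mod-$3$ Steenrod algebra. It is a finitely generated graded polynomial $\Z/3$-algebra, it is evenly generated, and its generator degrees, namely $2$, $4$ and $16$, are all prime to $3$; hence Theorem~\ref{invariant} applies and gives an isomorphism $A^*\otimes\Z/3\cong H^*(BT^n;\Z/3)^W$ for some $n$ and some $W\leq\mathrm{GL}_n(\Z/3)$ generated by pseudoreflections.

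Next I would separate off the generators of degree $2$. Let $V=H^2(BT^n;\Z/3)\cong(\Z/3)^n$, so that the invariant ring in question is $\Z/3[V]^W$. The generators of cohomological degree $2$ are precisely the fundamental invariants of polynomial degree $1$, so $W$ fixes an $m$-dimensional subspace of the dual of $V$, where $m$ is the number of degree-$2$ generators. Since $|W|$ is prime to $3$, Maschke's theorem splits $V$ $W$-equivariantly as the sum of an $m$-dimensional trivial representation and a $2$-dimensional representation $V'$ having no nonzero $W$-fixed vectors; the action of $W$ on $V'$ is then faithful and $\Z/3[V']^W$ is the subalgebra generated by the images of $x_4$ and $x_{16}$. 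Passing from cohomological degree $2d$ to polynomial degree $d$, we conclude that $W$ acts on $V'\cong(\Z/3)^2$ as a pseudoreflection group (Chevalley--Shephard--Todd, valid since $3\nmid|W|$) whose fundamental invariants have degrees $\{2,8\}$. Consequently $|W|=2\cdot 8=16$ and $W$ contains exactly $\sum_i(d_i-1)=(2-1)+(8-1)=8$ pseudoreflections.

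It remains to see that no such group sits inside $\mathrm{GL}_2(\Z/3)$. Since $|\mathrm{GL}_2(\Z/3)|=48=2^4\cdot 3$, the group $W$, having order $16$, is a Sylow $2$-subgroup; all of these are conjugate and isomorphic to the semidihedral group of order $16$ (it contains an order-$8$ Singer cycle, and the outer involution conjugates its generator to its cube). Now any pseudoreflection of $\mathrm{GL}_2(\Z/3)$ has order $2$, because the only unit of $\Z/3$ other than $1$ is $-1$, and conversely an involution of $\mathrm{GL}_2(\Z/3)$ is a pseudoreflection unless it is the central element $-I$. A direct count shows the semidihedral group of order $16$ has only five involutions, of which exactly one is central, so it contains at most four pseudoreflections, not the eight demanded above; this contradiction proves the lemma. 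Alternatively one may invoke the Clark--Ewing list already used for Proposition~\ref{Aguade}: the only pseudoreflection group with fundamental degrees $\{2,8\}$ is the dihedral group $I_2(8)$, realisable over $\Z/p$ only when $p\equiv\pm1\pmod 8$, whereas $3\not\equiv\pm1\pmod 8$. In either formulation the one genuinely non-routine point is this final structural input — pinning down the Sylow $2$-subgroup of $\mathrm{GL}_2(\Z/3)$ and counting its pseudoreflections, or noting the absence of the degree sequence $\{2,8\}$ from the Clark--Ewing table at $p=3$; verifying the hypotheses of Theorem~\ref{invariant}, performing the Maschke splitting, and the order/reflection bookkeeping for reflection groups are all standard.
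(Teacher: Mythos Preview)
Your argument is correct, but it follows a genuinely different route from the paper's. The paper gives a short direct Steenrod-algebra computation: writing $x$ for the degree-$16$ generator, the unstable axiom gives $\Pe^{8}(x)=x^{3}$, and the Adem relation $\Pe^{8}=-\Pe^{1}\Pe^{7}$ then forces $x^{3}\in\mathrm{Im}\,\Pe^{1}$; but since $\Pe^{1}$ is a derivation and no generator lies in a degree $\equiv 12\pmod{16}$, no pure power of $x$ can appear in $\mathrm{Im}\,\Pe^{1}$, a contradiction. This is elementary once the relevant Adem relation is known, and it matches the hands-on style of Lemma~\ref{remove1}. Your approach instead feeds the hypotheses into Theorem~\ref{invariant}, splits off the degree-$2$ part via Maschke, and then rules out a rank-two mod-$3$ pseudoreflection group with degrees $\{2,8\}$ by a Sylow/\allowbreak reflection count in $\mathrm{GL}_2(\Z/3)$ (or by the Clark--Ewing table). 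This uses heavier machinery but is more structural and sits naturally alongside Proposition~\ref{Aguade}: instead of locating a single failing identity, you reduce to the non-existence of a reflection representation, an argument shape that would adapt uniformly to other excluded degree sets.
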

\begin{proof}
    Let $A^*$ be the polynomial ring with the degrees of generators are equal to $\{4,16\}\cup\{2,\dots 2\}$, and let $x$ be the generator with degree $16$ in $A^*$.
    We assume that $A^*\times \Z/3$ has an unstable algebra structure over the mod-$3$ Steenrod algebra.
    By Adem relation, there is an equation $\Pe^8=-\Pe^1\Pe^7$.
    Since $\Pe^8(x)=x^3$, $x^3$ is in $\mathrm{Im}(\Pe^1)$.
    On the other hand since there is no generator $y$ with $|y|\equiv 12 \mod 16$, the term $x^i$ is not included in the image $\Pe^1$.
    It is contradiction.
\end{proof}

\begin{proposition}\label{polynomial_with_Steenrod}
    Let $A^*$ be a non-trivial graded polynomial algebra over $\Z$ satisfying the following condition.
    \begin{itemize}
        \item There is at most one generator with degree $4$.
        \item For all prime numbers $p$, $A\otimes \Z/p$ has an unstable algebra structure over the mod-$p$ Steenrod algebra.
    \end{itemize}
    Then the degree of the generator in $A^*$ is equal to the one of the following table as a multiset  for some $n$.
    \begin{itemize}
        \item $\{2,2,\dots 2\}$
        \item $\{4,6,\dots ,2n\}\cup\{2,2,\dots 2\}$
        \item $\{4,8,\dots ,4n\}\cup\{2,2, \dots 2\}$
        \item $\{4,8,\dots ,2^{n+1}-4,2^n\}\cup\{2,2,\dots 2\}$
    \end{itemize}
\end{proposition}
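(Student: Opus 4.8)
The plan is to combine Proposition \ref{Aguade}, Lemma \ref{remove1}, and Lemma \ref{remove2} to whittle down the Clark--Ewing list to the four families in the statement. First I would reduce to the reduced algebra: write $A^* = B^* \otimes \Z[y_1,\dots,y_k]$ where the $y_j$ have degree $2$ and $B^*$ is the polynomial subalgebra on the generators of degree $\geq 4$; since the $\Pe^i$ and $\Sq^{2i}$ act on a degree-$2$ class exactly as the unstable axioms demand (this is the cohomology of $\CP^\infty$), the unstable structure on $A^*\otimes\Z/p$ restricts to one on $B^*\otimes\Z/p$ and conversely, so it suffices to classify the possible degree multisets of $B^*$ under the hypothesis that $B^*$ has at most one generator of degree $4$. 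Apply Proposition \ref{Aguade} with $N$ absorbing all primes: the degree multiset of $B^*$ is a union of entries from that table. The hypothesis ``at most one generator of degree $4$'' immediately forbids combining two table entries each of which contains a $4$, and forbids $\{4,8,\dots,4n\}$ (or any other entry beginning with $4$) from appearing twice. So $B^*$ is either a single entry of the table, or a single entry together with copies of $\{2\}$ (which we have already absorbed), with the caveat that an entry \emph{not} containing $4$ cannot occur together with another such, again by no-wait --- actually here one must be slightly careful: $\{4,6,\dots,2n\}$ and $\{4,8,\dots,4n\}$ already include $\{4\}$, so only one non-$\{2\}$ block survives.

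Next I would run the parity obstructions. Among the surviving single-block possibilities the exceptional sporadic lists $\{4,12\}$, $\{4,12,16,24\}$, $\{4,10,12,16,18,24\}$, $\{4,12,16,20,24,28,36\}$, $\{4,16,24,28,36,40,48,60\}$, $\{4,24\}$, $\{4,48\}$, together with the near-miss family $\{4,8,\dots,4(n-1),2n\}$ when $n\geq 4$ is not a power of $2$, are all killed by Lemma \ref{remove1} (which uses Thomas's surjectivity theorem, Theorem \ref{relation}, at the prime $2$). The remaining sporadic case $\{4,16\}$ is eliminated by Lemma \ref{remove2} at the prime $3$. What is left from the table is: $\{2\}$-only (the trivial-in-degree-$\geq 4$ case, giving $\{2,2,\dots,2\}$), $\{4,6,\dots,2n\}$, $\{4,8,\dots,4n\}$, and $\{4,8,\dots,4(n-1),2n\}$ with $n$ a power of $2$. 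Rewriting $n=2^m$ in the last family, $\{4,8,\dots,4(2^m-1),2^{m+1}\} = \{4,8,\dots,2^{m+2}-4,2^{m+1}\}$, which is exactly the fourth bullet of the proposition (with a harmless reindexing of the exponent). Re-tensoring with the suppressed degree-$2$ polynomial generators gives the four listed multisets.

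The main obstacle I expect is the bookkeeping in the reduction step: one must verify that ``at most one generator of degree $4$'' genuinely leaves only a single Clark--Ewing block (plus $2$'s), which requires checking that every entry in the Proposition \ref{Aguade} table except $\{2\}$ contains a $4$ --- this is true by inspection --- and also that one cannot, say, take $\{4,6,\dots,2n\}$ and separately adjoin more structure in a way that hides extra degree-$4$ classes; since the generators of a polynomial algebra are intrinsic (they span $QA^*$), no such hiding is possible, so this is really just careful reading of the table. A secondary subtlety is making sure the $n=1,2,3$ edge cases of the fourth family either coincide with or are subsumed by the second or third families (e.g.\ $n=1$ degenerates, $n=2$ gives $\{4,8\}$ which sits in the $\{4,8,\dots,4n\}$ family, $n=4$ gives $\{4,8,12,16,8\}$ — here one checks Lemma \ref{remove1}'s hypothesis ``$n$ not a power of $2$'' is exactly the complementary condition), so that the statement's ``for some $n$'' is accurate without overlap concerns. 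None of these steps is deep; the work is entirely in assembling the three cited results cleanly.
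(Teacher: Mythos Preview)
Your argument is correct and follows the same route as the paper: apply Proposition~\ref{Aguade}, observe that every non-$\{2\}$ block in the Clark--Ewing list contains a $4$ so that the ``at most one degree-$4$ generator'' hypothesis forces a single block plus copies of $\{2\}$, then use Lemmas~\ref{remove1} and~\ref{remove2} to kill the exceptional families and the $\{4,8,\dots,4(n-1),2n\}$ case with $n$ not a power of $2$. Your preliminary factorization $A^* = B^*\otimes\Z[y_1,\dots,y_k]$ is an extra step the paper omits (the cited lemmas already accommodate the degree-$2$ generators); the passage from $A^*$ to $B^*$ is not literally a \emph{restriction} of the Steenrod action but rather descent to the quotient by the ideal $(y_1,\dots,y_k)$, which is Steenrod-invariant precisely because $\Pe^i(y_j)\in(y_j)$ for degree-$2$ classes --- so the step is valid, just phrased loosely.
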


\begin{proof}
    By Proposition \ref{Aguade} and the first condition, the degree of the generator in $A^*$ is equal to the union of the one of the table in Proposition \ref{Aguade} and the copies of $\{2\}$.
    By Lemma \ref{remove1}, \ref{remove2}, the cases except for the cases $\{4,6,\dots ,2n\}$, $\{4,8,\dots ,4n\}$ or $\{4,8,\dots ,2^{n+1}-4,2^n\}$ don't satisfies the second condition.
    Thus we obtain this proposition.
\end{proof} 

\begin{example}
    Let $\Z/2[t_1, \dots t_{2^n}]\cong H^*(BT^{2^n};\Z/2)$ for $n\geq 2$.
    Then we take a subring of $H^*(BT^{2^n};\Z/2)$ as
    \[
        \Z/2[t_1,t_2, \dots t_{2^{n}-1}]^{W(Sp(2^{n}-1))} \otimes \Z/2[{t_{2^n}}^{2^{n-1}}],
    \]
    where $W(Sp(2^{n}-1))$ is the Weyl group of $Sp(2^n-1)$ and $\Z/2[t_1,t_2, \dots t_{2^n}]^{W(Sp(2^{n}-1))}$ is the invariant ring of the canonical $W(Sp(2^{n}-1))$-action.
    Since $\Z/2[t_1,t_2, \dots t_{2^n}]^{W(Sp(2^{n}-1))}$ is isomorphic to $H^*(BSp(2^n-1);\Z/2)$, this subring preserve the action of mod-$2$ Steenrod operations, and the degree of generators in this subring is $\{4,8,\dots ,2^{n+1}-4,2^n\}$.
    This subring has the unstable algebra structure over the mod-$2$ Steenrod algebra induced by $H^*(BT^{2^n};\Z/2)$.
\end{example}
When $p$ is an odd prime number, the cohomology ring $H^*(BSpin(2^n);\Z/p)$ is isomorphic to the polynomial ring with generator's degree $\{4,8,\dots ,2^{n+1}-4,2^n\}$ (cf.  Chapter III, Theorem 3.19 in \cite{TM}), and has the unstable algebra structure over the mod-$p$ Steenrod algebra.
And the ring in this example has the unstable algebra structure over the mod-$2$ Steenrod algebra.
Therefore by only using the method in this section, we cannot remove the case $\{4,8,\dots ,2^{n+1}-4,2^n\}$ in Proposition \ref{polynomial_with_Steenrod}.


\section{Stanley-Reisner ring and Steenrod algebra}\label{s_requirement}

Let $K$ be a simplicial set with the vertex set $V$, and $\phi \colon V\rightarrow 2\Z_{>0}$.
For a polynomial $f \in SR(K,\phi)$ and a monomial $g$, we denote $g < f$ when $g\ne 0$ in $SR(K,\phi)$ and coefficient of $g$ in $f$ is not equal to 0. 
This notation is well-defined because the ideal $I$ is generated by monomials.

\begin{lemma}\label{preserve_pre}
    Let $X$ be a space such that $H^*(X)\cong SR(K,\phi)$ for some graded Stanley-Reisner ring, and $\sigma \in K$ be a maximal simplex.
    Then for any prime number $p$ the ideal $(V\setminus \sigma)$ in $H^*(X;\Z/p)\cong SR(K,\phi)\otimes \Z/p$ preserves the action of the mod-$p$ Steenrod algebra.
\end{lemma}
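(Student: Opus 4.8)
The plan is to work entirely inside $A:=H^*(X;\Z/p)$, which the statement identifies with the evenly graded $\mathcal{A}_p$-algebra $SR(K,\phi)\otimes\Z/p$, and to organize the argument around the single class $c:=\prod_{y\in\sigma}y\in A$. First I would record two elementary facts. (i) The quotient ring $A/(V\setminus\sigma)$ is the polynomial ring $\Z/p[\sigma]$ on the vertices of $\sigma$: since $\sigma$ is a \emph{face}, every non-face of $K$ contains a vertex outside $\sigma$, so the Stanley--Reisner relations all lie in $(V\setminus\sigma)$ already and the quotient is a free polynomial algebra. (ii) $\alpha c=0$ in $A$ for every $\alpha$ in the ideal $I:=(V\setminus\sigma)$; it is enough to check this on a ring generator $x\in V\setminus\sigma$, where the monomial $xc$ has support $\sigma\cup\{x\}$, strictly larger than $\sigma$, hence not a face because $\sigma$ is \emph{maximal}, so $xc=0$. (Equivalently $I=\mathrm{Ann}_A(c)$, though only (ii) is used below.) The payoff of (i) is that the image $\bar c$ of $c$ in $A/I\cong\Z/p[\sigma]$ is a nonzero element of a polynomial ring, hence a non-zero-divisor.

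Granting this, I would prove $\Sq^n(I)\subseteq I$ for all $n$ (and, for odd $p$, $\Pe^n(I)\subseteq I$) by induction on $n$; since $\mathcal{A}_p$ is generated by these operations together with the Bockstein, and $\beta(I)\subseteq A^{\mathrm{odd}}=0\subseteq I$ automatically, this yields $\theta(I)\subseteq I$ for every $\theta\in\mathcal{A}_p$ and hence the lemma. For the inductive step take $\alpha\in I$, so $\alpha c=0$ by (ii); applying $\Sq^n$ and the Cartan formula gives $0=\Sq^n(\alpha c)=\sum_{i+j=n}\Sq^i(\alpha)\,\Sq^j(c)$, and pushing this forward along the ring homomorphism $q\colon A\to A/I$ gives $0=\sum_{i+j=n}q(\Sq^i(\alpha))\,q(\Sq^j(c))$. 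For $i<n$ the inductive hypothesis gives $\Sq^i(\alpha)\in I=\ker q$, so those terms vanish and we are left with $q(\Sq^n(\alpha))\cdot q(c)=0$; since $q(c)=\bar c$ is a non-zero-divisor, $q(\Sq^n(\alpha))=0$, i.e.\ $\Sq^n(\alpha)\in I$. The base case $n=0$ is trivial, and the odd-primary argument is identical with $\Pe$ in place of $\Sq$.

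What I expect to carry the weight --- more a matter of hitting on the right move than a real obstacle --- is the \textbf{passage to the quotient ring $\Z/p[\sigma]$}. Running the Cartan formula directly on $\alpha c=0$ inside $A$ does not close up, because the terms $\Sq^j(c)$ with $j\ge 1$ need not be divisible by $c$ and so cannot be cleared away; the virtue of working modulo $I$ is that ``divisible by $c$'' is replaced by ``killed by multiplication by the non-zero-divisor $\bar c$,'' which is precisely the leverage the induction needs. Maximality of $\sigma$ enters only through $xc=0$, and facehood of $\sigma$ only through $A/(V\setminus\sigma)$ being a polynomial ring; the remainder is routine bookkeeping with the Cartan formula and the combinatorics of $SR(K,\phi)$.
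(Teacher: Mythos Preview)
Your proof is correct and follows essentially the same strategy as the paper's: both multiply by $c=\prod_{y\in\sigma}y$, use maximality of $\sigma$ to get $I\cdot c=0$, apply the Cartan formula, and run an induction/minimality argument on the Steenrod degree to peel off all but the top term. The paper phrases this as a minimal-counterexample contradiction with explicit monomial bookkeeping (tracking a monomial $f\notin I$ appearing in $\Pe^i(x)$) rather than your cleaner quotient-ring/non-zero-divisor formulation, but the underlying argument is identical.
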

\begin{proof}
    
    We assume that the ideal $(V\setminus \sigma)$ doesn't preserve the action of mod-$p$ Steenrod algebra.
    Then by the Cartan formula, there is $x\in V\setminus \sigma$ and a monomial $f \in SR(K,\phi)$ such that $f \notin (V\setminus \sigma)$ and $f < \Pe^i(x)$ for some $i$.
    Now we can take $f$ with $i$ being minimal i.e. $\Pe^j(x)\in (V\setminus \sigma)$ for $j<i$.
    We denote $g=\prod_{y\in \sigma}y$.
    Since $f$ is a monomial generated by $\sigma$, $fg\ne0$ in $SR(K,\phi)$.
    Then $$f g < \Pe^i(x)g$$ and since $i$ is minimal, $$fg \not< \sum_{j>0}\Pe^{i-j}(x)\Pe^{j}\left(g\right).$$
    Therefore by Cartan formula $$f g < \Pe^i(x)g+\sum_{j>0}\Pe^{i-j}(x)\Pe^{j}\left(g\right)=\Pe^i\left(xg\right),$$ and we obtain $$\Pe^i\left(xg\right)\ne 0.$$
    Since $xg=0$ in $SR(K,\phi)$, it is contradiction, so the assumption is false. 
    This completes the proof.
\end{proof}

\begin{proposition}\label{preserve}
    Let $X$ be a space such that $H^*(X)\cong SR(K,\phi)$ for some graded Stanley-Reisner ring.
    Let $\sigma_1, \dots \sigma_m \in K$ be maximal simplexes. 
    Then for any prime number $p$ the ring
    \[
        SR(K,\phi)\otimes \Z/p\Z /(V\setminus \sigma_1\cap \dots \cap \sigma_m)
    \]
    has an unstable algebra structure over the mod-$p$ Steenrod algebra induced by the quotient map 
    \[H^*(X;\Z/p)\cong SR(K,\phi)\otimes \Z/p\Z \rightarrow SR(K,\phi)\otimes \Z/p\Z /(V\setminus \sigma_1\cap \dots \cap \sigma_m).\]
\end{proposition}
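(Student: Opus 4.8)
The plan is to deduce Proposition~\ref{preserve} from Lemma~\ref{preserve_pre} by writing the ideal $(V\setminus\sigma_1\cap\dots\cap\sigma_m)$ as a sum of the ideals attached to the individual maximal simplices, and then to invoke the elementary facts that a sum of Steenrod-closed submodules is Steenrod-closed and that a quotient of an unstable algebra by a Steenrod-closed ideal is unstable.

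First I would record the De Morgan identity
\[
    V\setminus(\sigma_1\cap\dots\cap\sigma_m)=\bigcup_{i=1}^m\bigl(V\setminus\sigma_i\bigr).
\]
Since, for subsets of $V$, the ideal generated by a union is the sum of the ideals generated by the pieces, this yields the equality of ideals
\[
    (V\setminus\sigma_1\cap\dots\cap\sigma_m)=\sum_{i=1}^m(V\setminus\sigma_i)
\]
in $SR(K,\phi)\otimes\Z/p$. By Lemma~\ref{preserve_pre} each summand $(V\setminus\sigma_i)$ is closed under the mod-$p$ Steenrod operations; as these operations are additive, a sum of Steenrod-closed submodules is again Steenrod-closed, so $(V\setminus\sigma_1\cap\dots\cap\sigma_m)$ is a Steenrod-closed ideal. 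Hence the quotient map
\[
    H^*(X;\Z/p)\cong SR(K,\phi)\otimes\Z/p\longrightarrow SR(K,\phi)\otimes\Z/p\,/(V\setminus\sigma_1\cap\dots\cap\sigma_m)
\]
commutes with the Steenrod operations, and, being a ring homomorphism, transports the Cartan formula to the target; so the target carries an action of the mod-$p$ Steenrod algebra with Cartan formula, induced by this map.

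It remains to check instability. The mod-$p$ cohomology $H^*(X;\Z/p)$ of any space is an unstable algebra over the mod-$p$ Steenrod algebra, so the defining instability relations hold there (vanishing of $\Pe^k$, resp.\ $\Sq^{2k}$, above the degree, equality of the top operation with the $p$-th power, and the relations involving the Bockstein when odd degrees are present). Each such relation is an identity between elements built from the ring operations and the Steenrod operations, and both are respected by the quotient map, so every such relation descends to the quotient. Therefore $SR(K,\phi)\otimes\Z/p\,/(V\setminus\sigma_1\cap\dots\cap\sigma_m)$ is an unstable algebra over the mod-$p$ Steenrod algebra, as claimed.

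I do not expect a genuine obstacle here: the only points requiring care are the passage from each $(V\setminus\sigma_i)$ being Steenrod-closed to their sum being Steenrod-closed (immediate from additivity) and the fact that instability passes to quotients by Steenrod-closed ideals (standard). If one wishes to be scrupulous about the degenerate case $\sigma_1\cap\dots\cap\sigma_m=\emptyset$, the quotient is then just $\Z/p$ concentrated in degree $0$, which is trivially unstable.
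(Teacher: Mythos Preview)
Your proof is correct and follows essentially the same route as the paper: both reduce to Lemma~\ref{preserve_pre} via the observation that $(V\setminus\sigma_1\cap\dots\cap\sigma_m)=\sum_i(V\setminus\sigma_i)$ (the paper phrases this as each generator $x\in V\setminus\sigma_k$ satisfying $\Pe^i(x)\in(V\setminus\sigma_k)\subset(V\setminus\sigma_1\cap\dots\cap\sigma_m)$), so the ideal is Steenrod-closed. You are more explicit than the paper about why the induced action on the quotient is unstable, which the paper leaves tacit.
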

\begin{proof}
    By Lemma \ref{preserve_pre} for all $x\in V\setminus \sigma_k$ and $i$, we obtain $$\Pe^i(x)\in (V\setminus \sigma_k)\subset (V\setminus \sigma_1\cap\dots \cap \sigma_n).$$
    Therefore the ideal $(V\setminus \sigma_1\cap \dots \cap \sigma_m)$ preserves the action of the mod-$p$ Steenrod algebra.
\end{proof}

\begin{theorem}\label{requirement}
    For a graded Stanley-Reisner ring $SR(K,\phi)$, let $X$ be a space such that $H^*(X)\cong SR(K,\phi)$.
    We assume that there is no pair of generators $x,y\in V$ such that $\phi(x)=\phi(y)=4$ and $xy\ne 0$ in $SR(K,\phi)$.
    Then for $\sigma \in P_{\max}(K)$, the set $\{\phi(x)\mid x \in \sigma\}$ is equal to $\{2,\dots 2\}$, $\{4,6,\dots 2n+2\}\cup\{2,\dots 2\}$, $\{4,8, \dots 4n\}\cup\{2,\dots 2\}$ or $\{4,8, \dots 2^{n+2}-8, 2^{n+2}-4, 2^{n+1}\}\cup\{2,\dots 2\}$ as a multiset for some $n\geq 1$.
\end{theorem}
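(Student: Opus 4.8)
The plan is to fix a single $\sigma\in P_{\max}(K)$ and show that the multiset $\{\phi(x)\mid x\in\sigma\}$ is forced into one of the listed forms by combining Proposition \ref{preserve} with Proposition \ref{polynomial_with_Steenrod}; the combinatorics of $K$ enters only through the fact that $\sigma$ is a face of $K$.

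First I would unwind the definition of $P_{\max}(K)$: there are maximal simplices $\sigma_1,\dots,\sigma_m\in K$ with $\sigma_1\cap\cdots\cap\sigma_m=\sigma$, and in particular $\sigma$ is itself a face of $K$. The key preliminary point is to identify the quotient ring occurring in Proposition \ref{preserve}. Since every subset of $\sigma$ is a face of $K$, no generating square-free monomial of the Stanley-Reisner ideal is supported entirely on the vertices of $\sigma$; hence every such generator already lies in the ideal $(V\setminus\sigma)$, and therefore $SR(K,\phi)/(V\setminus\sigma)\cong\Z[\sigma]$, the polynomial ring over $\Z$ on the vertex set of $\sigma$ with $|x|=\phi(x)$. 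Reducing mod $p$ gives $SR(K,\phi)\otimes\Z/p\,/\,(V\setminus\sigma_1\cap\cdots\cap\sigma_m)\cong\Z/p[\sigma]$. By Proposition \ref{preserve} this ring carries an unstable algebra structure over the mod-$p$ Steenrod algebra for every prime $p$; equivalently, $B^{*}:=\Z[\sigma]$ is a graded polynomial algebra over $\Z$ with $B^{*}\otimes\Z/p$ unstable over the mod-$p$ Steenrod algebra for all $p$ (and $B^{*}$ is evenly generated, since $\phi$ takes even values).

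Next I would check the remaining hypothesis of Proposition \ref{polynomial_with_Steenrod}, namely that $B^{*}$ has at most one generator of degree $4$. The generators of $B^{*}=\Z[\sigma]$ are exactly the vertices of $\sigma$, and if $x\ne y$ were vertices of $\sigma$ with $\phi(x)=\phi(y)=4$, then, $\sigma$ being a face of $K$, the monomial $\prod_{z\in\sigma}z$ is nonzero in $SR(K,\phi)$, so in particular $xy\ne 0$, contradicting the standing hypothesis of the theorem. Hence $\sigma$ has at most one degree-$4$ vertex. (If $\sigma=\emptyset$, or if $B^{*}$ has only degree-$2$ generators, the conclusion is the first listed multiset and there is nothing to prove.) Applying Proposition \ref{polynomial_with_Steenrod} to $B^{*}$ now forces $\{\phi(x)\mid x\in\sigma\}$, which is precisely the multiset of generator degrees of $B^{*}$, to equal one of $\{2,\dots,2\}$, $\{4,6,\dots,2n\}\cup\{2,\dots,2\}$, $\{4,8,\dots,4n\}\cup\{2,\dots,2\}$, or $\{4,8,\dots,2^{n+1}-4,2^{n}\}\cup\{2,\dots,2\}$. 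A routine reindexing ($n\mapsto n+1$ in the second and fourth families) converts these into the four families in the statement; the degenerate small-$n$ instances of the fourth family, which would contain two or more generators of degree $4$, do not arise under our hypothesis, so only the stated range of $n$ survives.

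I do not anticipate a genuine obstacle: the substantive input has been isolated in Propositions \ref{preserve} and \ref{polynomial_with_Steenrod}. The two places that require care are the identification $SR(K,\phi)/(V\setminus\sigma)\cong\Z[\sigma]$ as an honest polynomial ring — which is exactly where the fact that $\sigma$ is a face of $K$ is used — and the deduction, from the theorem's hypothesis on pairs of degree-$4$ generators, that $\sigma$ contains at most one vertex of degree $4$, without which Proposition \ref{polynomial_with_Steenrod} would not be applicable.
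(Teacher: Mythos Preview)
Your proposal is correct and follows essentially the same route as the paper's own proof: identify $SR(K,\phi)/(V\setminus\sigma)\cong\Z[\sigma]$ using that $\sigma$ is a face, invoke Proposition~\ref{preserve} to endow $\Z[\sigma]\otimes\Z/p$ with an unstable Steenrod-algebra structure for every prime $p$, use the hypothesis on degree-$4$ pairs to ensure at most one degree-$4$ generator, and then apply Proposition~\ref{polynomial_with_Steenrod}. Your added remarks on the reindexing and the degenerate small-$n$ cases are harmless elaborations of what the paper leaves implicit.
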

\begin{proof}
    Since $\sigma \in K$, there is no relation between the generators in $\sigma$.
    Therefore there is an isomorphism
    \[
        SR(K,\phi)/(V\setminus \sigma)\cong \Z[\sigma].
    \]
    By the definition of $P_{\max}(K)$ there are maximal simplexes $\sigma_1,\dots ,\sigma_m \in K$ such that $\sigma=\bigcap_i \sigma_i$.
    Thus $\Z[\sigma]$ satisfies the condition of Proposition \ref{preserve}.
    By the assumption in the statement, for any $\sigma \in P_{\max}(K)$ there is at most one generator with degree $4$ in $\sigma$.
    By Proposition \ref{preserve} and this condition, the polynomial ring $\Z[\sigma]$ satisfies the condition in Proposition \ref{polynomial_with_Steenrod}.
    Therefore the set $\{\phi(x)\mid x \in \bigcap_i \sigma_i\}$ is equal to $\{2,\dots 2\}$, $\{4,6,\dots 2n+2\}\cup\{2,\dots 2\}$, $\{4,8, \dots 4n\}\cup\{2,\dots 2\}$ or $\{4,8, \dots 2^{n+2}-8, 2^{n+2}-4, 2^{n+1}\}\cup\{2,\dots 2\}$ as a multiset.
\end{proof}

\begin{example}
    Let $SR(K,\phi)\cong \Z[x_4,x_6]/(x_4x_6)$, where $|x_i|=i$. Then $P_{\max}(K)=\{\{4\},\{6\}\}$. By Theorem \ref{requirement}, there is no space $X$ with $H^*(X)\cong \Z[x_4,x_6]/(x_4x_6)$.
\end{example}


\section{Proof of the main theorem}\label{s_proof}
By combining Corollary \ref{hocolim2} and Theorem \ref{requirement}, we can prove Theorem \ref{main}.

\begin{proof}[Proof of Theorem \ref{main}]
    In Corollary \ref{hocolim2}, we prove that if $SR(K,\phi)$ satisfies these conditions then there is a space $X$ such that $H^*(X)\cong SR(K,\phi)$.
    
    On the other hand, we assume that there is a space $X$ such that $H^*(X)\cong SR(K,\phi)$.
    By assumption in the statement for $i=2$, $SR(K,\phi)$ satisfies the condition of Theorem \ref{requirement}.
    By Theorem \ref{requirement}, for any $\sigma \in P_{\max}(K)$ the set $\{\phi(x)\mid x \in \sigma\}$ is equal to $\{2,\dots 2\}$, $\{4,6,\dots 2n+2\}\cup\{2,\dots 2\}$, $\{4,8, \dots 4n\}\cup\{2,\dots 2\}$ or $\{4,8, \dots 2^{n+2}-8, 2^{n+2}-4, 2^{n+1}\}\cup\{2,\dots 2\}$ as a multiset.
    By the assumption in the statement, there is no pair of generators $x,y$ such that $|x|=|y|=2^n$ for some $n\geq 3$ and $xy\ne 0$ in $SR(K,\phi)$. 
    Since the case $\{4,8, \dots 2^{n+2}-8, 2^{n+1}-4, 2^{n}\}\cup\{2,\dots 2\}$ for $n\geq 3$ includes such a pair of generators, this case doesn't appear.
    Therefore for any $\sigma \in P_{\max}(K)$ the set $\{\phi(x)\mid x \in \sigma\}$ is equal to $\{2,\dots 2\}$, $\{4,6,\dots 2n+2\}\cup\{2,\dots 2\}$, $\{4,8, \dots 4n\}\cup\{2,\dots 2\}$ as a multiset.

    By combining these, the proof is complete.
\end{proof}

\end{document}